\crefname{section}{Section}{Sections}
\crefname{subsection}{\S}{\S\S}
\crefname{subsubsection}{\S}{\S\S}
\theoremstyle{plain}
\newtheorem{lemma}{Lemma}[section]
\newtheorem{proposition}[lemma]{Proposition}
\newtheorem{corollary}[lemma]{Corollary}
\newtheorem{theorem}[lemma]{Theorem}
\newtheorem{question}[lemma]{Question}
\theoremstyle{plain}
\newtheorem{theoremN}{Theorem}
\theoremstyle{plain}
\newtheorem{definition}[lemma]{Definition}
\newtheorem{example}[lemma]{Example}
\newtheorem{remark}[lemma]{Remark}
\newtheorem{remarks}[lemma]{Remarks}
\newtheorem{notation}[lemma]{Notation}
\crefname{definition}{definition}{definitions}
\crefname{ex}{example}{examples}
\crefname{exs}{example}{examples}
\crefname{remark}{remark}{remarks}
\crefname{remarks}{remark}{remarks}
\crefname{convention}{convention}{conventions}
\crefname{notation}{notation}{notations}
\crefname{table}{table}{tables}
\crefname{lemma}{lemma}{lemmas}
\crefname{proposition}{proposition}{propositions}
\crefname{corollary}{corollary}{corollaries}
\crefname{theorem}{theorem}{theorems}
\crefname{theoremN}{theorem}{theorems}
\crefname{enumi}{}{}
\crefname{question}{question}{Questions}
\crefname{assumption}{assumption}{Assumptions}
\crefname{construction}{construction}{Constructions}
\crefname{equation}{}{}
\numberwithin{equation}{section}
\renewcommand{\theequation}{\thesection-\arabic{equation}}
\theoremstyle{nonumberplain}
\newtheorem{proof}{Proof}
\newcommand\pf[1]{\newtheorem{#1}{Proof of \Cref{#1}}}
\newcommand\bC{{\mathbb C}}
\newcommand\bQ{{\mathbb Q}}
\newcommand\bR{{\mathbb R}}
\newcommand\bZ{{\mathbb Z}}
\newcommand\cC{{\mathcal C}}
\newcommand\cF{{\mathcal F}}
\newcommand\cP{{\mathcal P}}
\DeclareMathOperator{\id}{id}
\newcommand\numberthis{\addtocounter{equation}{1}\tag{\theequation}}
\newcommand{\cat}[1]{\textsc{#1}}
\newcommand\spr[1]{\cite[\href{https://stacks.math.columbia.edu/tag/#1}{Tag {#1}}]{stacks-project}}
\newcommand{\qedhere}{\mbox{}\hfill\ensuremath{\blacksquare}}
\title{Localized strict topologies on multiplier algebras of pro-$C^*$-algebras}
\author{Alexandru Chirvasitu}
\begin{document}

\date{}

\newcommand{\Addresses}{{
  \bigskip
  \footnotesize

  \textsc{Department of Mathematics, University at Buffalo}
  \par\nopagebreak
  \textsc{Buffalo, NY 14260-2900, USA}  
  \par\nopagebreak
  \textit{E-mail address}: \texttt{achirvas@buffalo.edu}


}}

\maketitle

\begin{abstract}
  The bounded localization $\beta_b$ of a locally convex topology $\beta$ is defined as the finest locally convex topology agreeing with $\beta$ on all bounded sets. We show that the strict topology on the multiplier algebra of a bornological pro-$C^*$-algebras equals its own localization, generalizing the analogous result due to Taylor for multiplier algebras of plain $C^*$-algebras.

  We also (a) characterize the barreled commutative unital pro-$C^*$-algebras as those of continuous functions on functionally Hausdorff spaces whose relatively pseudocompact subsets are relatively compact, equipped with the topology of uniform convergence on compact subsets, and (b) describe a contravariant equivalence between the category of commutative unital pro-$C^*$-algebras and a category of Tychonoff (rather than functionally Hausdorff) topological spaces.
\end{abstract}

\noindent {\em Key words: pro-$C^*$-algebra; $\sigma$-$C^*$-algebra; locally convex; seminorm; polar; absolutely convex; barrel; bornological; bounded set; localization; strict topology; multiplier algebra; Tychonoff space; completely regular; completely Hausdorff; functionally Hausdorff; compactly generated; $\kappa$-space; adjunction; limit; colimit; compactification; ultrafilter}

\vspace{.5cm}

\noindent{MSC 2020: 46L05; 46L85; 46A08; 18A30; 18A35; 46A13; 46M40; 47L40; 54D10; 54D15; 54D30; 54D35; 54D50; 54D80; 54B05; 54B30; 46A03; 46A04}

\tableofcontents

\section*{Introduction}

Consider the {\it multiplier algebra} $(M(A),\beta)$ \cite[Definition 2.2.2]{wo} of a $C^*$-algebra $A$, equipped with its {\it strict topology} $\beta$ \cite[Definition 2.3.1]{wo}. It is a non-obvious and occasionally convenient fact \cite[Corollary 2.7]{tay_strict} that $\beta$ equals its own {\it localization} with respect to bounded sets (in the terminology of \cite[discussion preceding \S IV.6.7]{schae_tvs}, say, or the precursor \cite{dorr_loc} to \cite{tay_strict}): the strongest locally convex topology $\beta_b$ on $M(A)$ agreeing with $\beta$ on the {\it bounded} \cite[\S I.5.1]{schae_tvs} subsets of $(M(A),\beta)$ is $\beta$ itself (as opposed to strictly stronger, as it could, in principle, have been). 

The present note was initially motivated by \cite{ct_ncclass_xv}, where it would have been useful, at one point, to have an analogue of Taylor's localization result for the {\it pro}-$C^*$-algebras \cite[Definition 1.1]{phil1}: {\it cofiltered limits} \spr{04AY} of $C^*$-algebras in the category of topological algebras.

Such a gadget ($A$, say) again has a multiplier algebra $M(A)$ \cite[Definition 3.13]{phil1} of {\it pro-}$C^*$-algebras \cite[Theorem 3.14]{phil1}
\begin{equation*}
  A\cong\varprojlim_{p\in S(A)} A_p,\quad \text{$C^*$-algebras }A_i,\quad A\to A_i\text{ surjective} 
\end{equation*}
(following \cite[discussion following Definition 1.1]{phil1} in denoting by $S(A)$ the set of continuous $C^*$-seminorms on $A$). Specifically, recall that \cite[Theorem 3.14]{phil1} exhibits two topologies of interest on $M(A)$:
\begin{itemize}
\item its own pro-$C^*$ topology $\tau$, resulting from the realization
  \begin{equation*}
    M(A)\cong \varprojlim_p M(A_p)
  \end{equation*}
  (with possibly non-surjective transition maps $M(A_i)\to M(A_j)$ \cite[example following Theorem 4.2]{apt});
\item and its strict topology $\beta$, as either
  \begin{itemize}
  \item the limit of the same inverse system with each $M(A_i)$ equipped with {\it its} strict topology;
  \item or, equivalently \cite[Theorem 3.14 (2)]{phil1}, the topology of pointwise convergence on $M(A)$ regarded as a set of operators
    \begin{equation*}
      A\ni a\xmapsto{\quad(l,r)\quad}(la,ra)\in A\oplus A,
    \end{equation*}
    as in \cite[Definition 3.13]{phil1} (with $A$ equipped with its pro-$C^*$ topology and $A\oplus A$ the usual \cite[\S II.4.5, Definition 2]{bourb_tvs} direct sum of locally convex topological vector spaces, or TVSs for short).
  \end{itemize}
\end{itemize}

Boundedness is definable \cite[Definition 14.1]{trev_tvs} for subsets of any TVS (see also \Cref{def:settypes} \Cref{item:bdd}), and the problem alluded to above is:

\begin{question}\label{qu:beta}
  Is the strict topology $\beta$ on the multiplier algebra $M(A)$ of a pro-$C^*$-algebra $A$ the finest locally convex topology $\beta_b$ agreeing with $\beta$ on $\tau$-bounded subsets of $M(A)$?
\end{question}

In this generality, the answer is `no' (\Cref{ex:taunetaub}). The issue remains, then, of singling sufficient conditions on a pro-$C^*$-algebra that will ensure localization does hold. \Cref{th:unifconvforfree} and \Cref{cor:sigmaok} do this, for a class broad enough to include {\it $\sigma$-$C^*$-algebras} (i.e. \cite[\S 5]{phil1} {\it countable} cofiltered limits of $C^*$-algebras). 

Recall (\cite[paragraph preceding Corollary 2 to Theorem 32.2]{trev_tvs} say, or \cite[\S III.2, Definition 1]{bourb_tvs}), for the purpose of stating (a summary of) the relevant result, that the {\it bornological} locally convex topological vector spaces $V$ are those for whom the continuity of linear maps defined thereon can be assessed based on their behavior on bounded sets: a linear map $V\xrightarrow{f}W$ to another locally convex TVS is continuous if and only if it sends bounded sets to bounded sets. The statement, then, is

\begin{theoremN}
  Let $A$ be a bornological pro-$C^*$-algebra (in particular, $A$ could be $\sigma$-$C^*$).

  The strict topology on the multiplier algebra $M(A)$ then equals its own localization with respect to the bounded subsets of $M(A)$.  \qedhere
\end{theoremN}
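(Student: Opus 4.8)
The plan is to compare the two topologies through their continuous seminorms. Since $\beta$ trivially agrees with itself on bounded sets, one always has $\beta\subseteq\beta_b$, so the entire content is the reverse inclusion $\beta_b\subseteq\beta$: equivalently, every $\beta_b$-continuous seminorm on $M(A)$ must be shown to be $\beta$-continuous, or, in neighborhood form, every absolutely convex set $U$ meeting each bounded set in a relative $\beta$-neighborhood of $0$ must be shown to be itself a $\beta$-neighborhood of $0$. I would first pin down the bounded sets: using that each $M(A_p)$ is a $C^*$-algebra on which strict-boundedness coincides with norm-boundedness (a Banach--Steinhaus argument run across the surjections $A\to A_p$), I would check that the $\tau$-bounded and $\beta$-bounded subsets of $M(A)\cong\varprojlim_p M(A_p)$ coincide, and are exactly the sets $B$ with $\sup_{m\in B}\|m\|_p<\infty$ for every $p\in S(A)$. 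This reconciles the ``$\tau$-bounded'' phrasing of \Cref{qu:beta} with the ``bounded subsets of $M(A)$'' of the present statement and gives a concrete handle on the sets on which $\beta_b$ is required to agree with $\beta$.

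The second ingredient is Taylor's theorem applied fiberwise: each $(M(A_p),\beta_p)$ equals its own localization. Because $\beta=\varprojlim_p\beta_p$, a basic $\beta$-continuous seminorm has the form $m\mapsto\|ma\|_p+\|am\|_p$ for $a\in A$ and $p\in S(A)$, pulled back along $\pi_p\colon M(A)\to M(A_p)$. The task is thus to show that a seminorm which is $\beta$-continuous on every bounded set is dominated by a finite combination of such basic seminorms. The obstruction to doing this naively is that a continuous seminorm on a cofiltered limit need not factor through a single index, and localization genuinely fails to commute with cofiltered limits in general; this is precisely the phenomenon behind the negative answer to \Cref{qu:beta} recorded in \Cref{ex:taunetaub}.

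This is where the bornological hypothesis on $A$ enters, and it is the crux. Presenting $A$ as the locally convex inductive limit $\varinjlim_{B} A_B$ over the normed spaces spanned by its bounded disks $B$, continuity of a linear map out of $A$ can be tested on the $B$ alone. The effect I would aim to extract — and which the name of \Cref{th:unifconvforfree} suggests — is that on bounded subsets of $M(A)$ the strict topology $\beta$ already coincides with the topology of \emph{uniform} convergence of $m\mapsto(ma,am)$ as $a$ ranges over a bounded disk of $A$, rather than merely pointwise in $a$: the passage from pointwise to uniform comes \emph{for free} from bornological-ness. Topologies of uniform convergence on the members of a bornology are, by construction, read off from their traces on bounded sets, and this identification — combined with the fiberwise Taylor result handling the individual $C^*$-level strict topologies on $M(A_p)$ — is the mechanism that yields $\beta_b=\beta$. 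The $\sigma$-$C^*$ case of \Cref{cor:sigmaok} then follows because metrizable spaces are automatically bornological.

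The main obstacle, accordingly, is exactly the interchange of localization with the cofiltered limit $M(A)\cong\varprojlim_p M(A_p)$: fiberwise self-localization together with coincidence of the bounded sets does not by itself force self-localization of the limit, so the bornological hypothesis must be deployed precisely at the step where a seminorm that is $\beta$-continuous on each bounded set is upgraded to one that is globally $\beta$-continuous. I expect the technical heart to be this upgrade — establishing the ``pointwise becomes uniform for free'' statement, i.e.\ showing that the bounded-disk inductive-limit description of $A$ survives the passage to $M(A)$ well enough that bornivorous behavior at the multiplier level is detected by finitely much of the defining inverse system.
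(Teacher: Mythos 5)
Your framing matches the paper up to the crux: the coincidence of bounded sets for $\tau$ and $\beta$ via Banach--Steinhaus across the surjections $A\to A_p$ is exactly \Cref{pr:multsamebdd}, the fiberwise use of Taylor's theorem on each $M(A_p)$ does occur, the diagnosis that the whole difficulty is a seminorm $\beta$-continuous on bounded sets failing to factor through a single index is correct (and \Cref{ex:taunetaub} is indeed the counterexample driving it), and the $\sigma$-$C^*$ reduction via metrizable $\Rightarrow$ bornological is the paper's. But at the step you yourself flag as the technical heart, the proposal stops at an expectation, and the mechanism you gesture at would not work. You propose to carry the bounded-disk inductive-limit description of $A$ over to $M(A)$ so that ``pointwise becomes uniform'' in $a$ on bounded sets; but $(M(A),\beta)$ is essentially never bornological for non-unital $A$: since $\beta$ and $\tau$ have the same bounded sets (\Cref{pr:multsamebdd}), the bornologification of $\beta$ is finer than $\tau$, so $\beta$ bornological would force $\beta=\tau$, which fails already for plain non-unital $C^*$-algebras. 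Hence no inductive-limit description of $M(A)$ of the kind you want is available, and nothing in the proposal explains how a $\beta_b$-continuous seminorm actually gets dominated through a single $p\in S(A)$. Your reading of the name of \Cref{th:unifconvforfree} is also off: the ``for free'' refers to the approximate-unit condition \Cref{eq:flambda0}, $(1-e_\lambda)f(1-e_\lambda)\to 0$ uniformly over the relevant set of functionals, which holds for \emph{any} pro-$C^*$-algebra with no bornological hypothesis --- not to a pointwise-to-uniform upgrade in $a$.

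The paper's actual mechanism is dual, running through equicontinuity of polars rather than through convergence in $a$. One writes down an explicit $\beta_b$-neighborhood basis at $0$: the absolutely convex hulls $W_\nu$ of unions $\bigcup_\gamma B_\gamma\cap V_\gamma$ over gauge balls (\Cref{def:gauge}, \Cref{eq:wphi}). Since each $W_\nu$ is $\beta_b$-closed and absolutely convex, the bipolar theorem converts ``$W_\nu$ contains a basic $\beta$-neighborhood'' into ``$W_\nu^{\circ}$ is $\beta$-equicontinuous,'' and the pro-$C^*$ analogue of Taylor's equicontinuity criterion (\Cref{th:equicontonma}) splits the latter into $\tau$-equicontinuity plus \Cref{eq:flambda0}; part (1) of \Cref{th:unifconvforfree} verifies \Cref{eq:flambda0} for every $W_\nu^{\circ}$ unconditionally. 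Bornologicity is then deployed twice, in neither of the ways you anticipated: first, it gives $\tau=\tau_b$ (\Cref{cor:bornok}), which via \Cref{th:bbsamefunc} identifies $M(A)^*_{\beta_b}$ with $M(A)^*_{\beta}\cong A^*_{\tau}$; second, bornological plus complete implies barreled, so Banach--Steinhaus applied on $A$ itself makes the pointwise-bounded sets $W_\nu^{\circ}$ $\tau$-equicontinuous --- which is precisely the factorization through a single $A\to A_p$ you were missing, after which Taylor's $C^*$-algebra theorem on $M(A_p)$ closes the argument (\Cref{cor:sigmaok}). So the gap is concrete: the passage from ``continuous on bounded sets'' to ``globally continuous'' is never supplied, and the route you sketch for it fails at $M(A)$; the working substitute is barreledness of $A$ plus the polar calculus above.
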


This line of inquiry, into boundedness in the locally convex topology of a pro-$C^*$-algebra, then naturally blends with examining such properties as being {\it barreled} \cite[Definition 33.1]{trev_tvs}: such that every {\it absolutely convex}, closed, {\it absorbent} subset is a neighborhood of the origin (the individual terms entering here are recalled in \Cref{def:settypes}). The property is essentially an abstraction away from the {\it Banach-Steinhaus (or uniform-boundedness)} \cite[Theorem 33.1]{trev_tvs}, and captures what one needs for that result to hold. It is also \cite[\S 28.1 (2)]{koeth_tvs-1}, for {\it complete} \cite[Definition 5.2]{trev_tvs} locally convex TVSs, somewhat weaker than being bornological.

Some partial information on the barreled-ness of {\it commutative} unital pro-$C^*$-algebras is provided in passing by \cite{inoue_loc}: while in general commutative unital $C^*$-algebras are \cite[Theorem 2.7 and Proposition 2.6]{phil1} spaces $C(X)$ of continuous complex-valued functions on a {\it functionally $T_2$} space $X$ (\Cref{res:whenbarr} \Cref{item:fcx}) equipped with the topology of uniform convergence on compact subsets of a sufficiently well-behaved ({\it distinguished} \cite[Definition 2.5]{phil1}) family $\cF\subseteq 2^X$ of compact subsets $F\subseteq X$, \cite[Remark 4.1]{inoue_loc} shows that being barreled implies $\cF$ must be {\it full} (i.e. contain all compact subsets).

All of this suggests a closer examination of commutative pro-$C^*$-algebras, the underlying topology of the corresponding spaces $X$, and how it relates to functional-analytic properties such as barreled-ness. \Cref{se:compl-comm-pro} pursues this train of thought in a number of directions; compressing and slightly paraphrasing \Cref{cor:alsot312} and \Cref{th:t312barrel}, for instance, we have 

\begin{theoremN}\label{th:introb}
  \begin{enumerate}[(1)]
  \item\label{item:introb-1} The commutative unital pro-$C^*$-algebras are precisely those of continuous functions on {\it Tychonoff} spaces $X$, equipped with the topology of uniform convergence on some {\it functionally distinguished} family $\cF\subseteq 2^X$ of compact subsets.  

  \item Such a pro-$C^*$-algebra is barreled precisely when $\cF$ is full (i.e. consists of all compact subsets) and every relatively pseudocompact subset of $X$ is relatively compact.  \qedhere
  \end{enumerate}
\end{theoremN}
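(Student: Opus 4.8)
For part~\Cref{item:introb-1} the plan is to run the Phillips structure theory through a complete-regularization step, and for part~(2) to reduce the problem to the classical Nachbin--Shirota circle of ideas. I would start from the representation $A\cong C(X)$ of a commutative unital pro-$C^*$-algebra as continuous functions on a functionally Hausdorff space $X$, topologized by uniform convergence over a distinguished \cite[Definition 2.5]{phil1} family $\cF$ of compact sets (\cite[Theorem 2.7 and Proposition 2.6]{phil1}, as recalled around \Cref{res:whenbarr} \Cref{item:fcx}). The idea is to replace $X$ by its complete regularization $\tau X$: put on the underlying set the initial topology for the members of $C(X)$ and then collapse the a priori non-$T_0$ non-separation relation. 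Since $X$ is already functionally Hausdorff no points are identified, so $\tau X$ has the same underlying set carrying a coarser Tychonoff topology, every $f\in C(X)$ remains continuous, and there is a canonical $*$-algebra isomorphism $C(X)\cong C(\tau X)$. First I would check that this is a homeomorphism for the relevant locally convex topologies: each $F\in\cF$ stays compact in the coarser topology and the restriction $f|_F$ is unchanged, so the seminorms $f\mapsto\sup_{x\in F}|f(x)|$ literally coincide; one then reads off that the image of $\cF$ is exactly what it should mean for a family to be \emph{functionally distinguished} on a Tychonoff space, the natural transcription of Phillips distinguishedness once separation is delivered by functions rather than by the topology.

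For the converse half of part~\Cref{item:introb-1} --- that $C(X)$ with uniform convergence over a functionally distinguished $\cF$ on a Tychonoff $X$ is a pro-$C^*$-algebra --- I would verify that it is a complete cofiltered limit $\varprojlim_{F\in\cF}C(F)$ of the $C^*$-algebras $C(F)$: Hausdorffness comes from $\cF$ separating points, while completeness is precisely what functional distinguishedness is designed to guarantee, as in Phillips's original gluing argument. For part~(2) I would first isolate the necessity of fullness. For any compact $K\subseteq X$ the seminorm $f\mapsto\sup_{x\in K}|f(x)|$ is lower semicontinuous for pointwise convergence, and since each continuous $f$ is bounded on the compact $K$ its closed unit ball $\{\,\|f\|_K\le 1\,\}$ is absolutely convex, closed and absorbent --- a barrel in the $\cF$-topology. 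Barreledness makes it a neighborhood of $0$, so $\sup_K|\cdot|$ is dominated by finitely many $\sup_{F_i}|\cdot|$ with $F_i\in\cF$; as $K$ ranges over all compact sets this says the $\cF$-topology already coincides with the compact-open topology, i.e.\ $\cF$ is full, which is the mechanism behind \cite[Remark 4.1]{inoue_loc}.

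Once the topology is compact-open I would invoke the Nachbin--Shirota theorem: on a Tychonoff space $X$ the algebra $C(X)$ with the compact-open topology is barreled if and only if $X$ is a $\mu$-space, i.e.\ every relatively pseudocompact (functionally bounded) subset has compact closure. The converse implication --- fullness together with the $\mu$-space property returning barreledness --- is the other half of the same theorem, which closes the equivalence asserted in part~(2).

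The step I expect to be the main obstacle is, in part~\Cref{item:introb-1}, pinning down functional distinguishedness so that it is at once (i) what complete regularization produces out of a Phillips-distinguished family on a functionally Hausdorff space and (ii) strong enough to force completeness in the converse direction; getting a single definition to discharge both roles is the crux. By comparison the locally convex bookkeeping in part~(2) --- confirming that $\{\,\|\cdot\|_K\le 1\,\}$ is genuinely closed in the $\cF$-topology (for which point evaluations must be continuous, i.e.\ $\bigcup\cF=X$) and that functional boundedness of subsets of $X$ matches the bornology of $C(X)$ --- should be routine once the dictionary of part~\Cref{item:introb-1} is in hand, leaving the classical Nachbin--Shirota equivalence as the analytic heart.
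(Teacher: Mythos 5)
Your proposal is correct, and it reaches both halves of the theorem by a route that is mathematically the same at the core but packaged quite differently from the paper. For part (1), your complete-regularization step is precisely the object-level content of the paper's functor $\tau$ and of \Cref{le:ktextend}: claim (I) there is your observation that a Phillips-distinguished family on a functionally $T_2$ space becomes functionally distinguished on $\tau X$ (with the subspace topologies on members of $\cF$ unchanged, by minimality of the compact Hausdorff topology), and your converse verification that $\tensor[_{\cF}]{C(X)}{}\cong\varprojlim_{F\in\cF}C(F)$ is a complete Hausdorff cofiltered limit replaces the paper's route back through the functor $\kappa$ (claim (II) of \Cref{le:ktextend}) and Phillips's theorem; both work, and your direct limit argument is arguably more self-contained, while the paper's categorical formulation (\Cref{th:ktcorr}, \Cref{cor:alsot312}) buys the full contravariant \emph{equivalence of categories} rather than just the object-level class equality stated in the theorem. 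For part (2), your fullness argument via the barrel $\{\|f\|_K\le 1\}$ is exactly the paper's mechanism (the supremum over a compact set is a lower semicontinuous seminorm, per \cite[Remark 4.1]{inoue_loc} and the proof of \Cref{th:t312barrel}); but where you then cite the classical Nachbin--Shirota theorem, the paper never invokes it by name and instead re-proves it inline: \Cref{le:pbddfam} shows every lower semicontinuous seminorm is a pointwise-bounded supremum of continuous ones, and \Cref{pr:whichbarr} combines this with K\"othe's characterization of barreledness to get the $\mu$-space criterion directly (in fact for functionally $T_2$ spaces, slightly more generally than the classical Tychonoff statement). Citing Nachbin--Shirota is legitimate and shorter; the paper's version is self-contained.

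One step you elide deserves a line: barreledness gives you a domination $\sup_K|\cdot|\le C\max_i\sup_{F_i}|\cdot|$ with $F_i\in\cF$, but passing from ``the $\cF$-topology is the compact-open topology'' to ``$\cF$ is full'' is not automatic. On a Tychonoff space it follows quickly: if $x\in K\setminus F$ with $F=\bigcup_i F_i\in\cF$, choose continuous $f$ with $f|_F\equiv 0$ and $f(x)=1$ to contradict the domination, whence $K\subseteq F$ and $K\in\cF$ by closure under compact subsets. (Alternatively, the uniqueness of $\cF$ built into the equivalence of \Cref{cor:alsot312} settles it.) This is a one-line repair, not a gap in the strategy.
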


Some elaboration:

\begin{itemize}
\item The {\it Tychonoff} spaces are also those referred to as $T_{3\frac 12}$ or $T_1$ and {\it completely regular} \cite[Definition 14.8]{wil_top}: points are closed, and for every point $x$ not belonging to a closed set $F$ there is a continuous function $X\xrightarrow{f}\bR$ with $f(x)=0$ and $f|_{F}\equiv 1$.

\item For {\it functionally distinguished} see \Cref{def:funcdist}. $\cF$ is required to contain singletons, be closed under taking subsets and finite unions, and, roughly speaking, determine which real-valued functions on the space are continuous.

\item That last condition is a weakening of the last item in \cite[Definition 2.5]{phil1} of a (plainly) distinguished family of compact sets.

\item Correspondingly, \Cref{th:introb} \Cref{item:introb-1} is a ``Tychonoff-flavored'' version of \cite[Theorem 2.7]{phil1}: it describes the category of commutative unital pro-$C^*$-algebras in terms of Tychonoff-spaces-and-families in place of completely-Hausdorff-spaces-and-families.

  This is on occasion useful and perhaps of some independent interest, e.g. for the purpose of appealing {\it directly} to the rich theory of measures \cite[II, Chapter 7]{bog_meas} on Tychonoff spaces. 
\end{itemize}

Other ancillary remarks are scattered throughout, such as
\begin{itemize}
\item a discussion of the tension between the properties of being compactly generated functionally $T_2$ and respectively $T_{3\frac 12}$ (\Cref{res:whenbarr} \Cref{item:cautioncomplhaus}), expanding slightly on that in \cite[\S 2]{phil1};

\item an adjunction between the respective categories of topological spaces in \Cref{pr:ktadj};

\item which later lifts to an equivalence (\Cref{le:ktextend} and \Cref{th:ktcorr}) when (functionally) distinguished families are super-added;

\item various natural examples of pathologies: e.g. full distinguished families do not always entail being barreled, so that the converse to \cite[Remark 4.1]{inoue_loc} does not hold (\Cref{res:whenbarr} \Cref{item:fcx} and \Cref{res:needextraconstr} \Cref{item:inoueagain}).
\end{itemize}

\subsection*{Acknowledgements}

This work is partially supported by NSF grant DMS-2001128.

I am grateful for many comments and pointers from M. Tobolski and A. Viselter. 

\section{Preliminaries}\label{se.prel}

The phrase `topological vector space' (complex unless specified otherwise) is conveniently rendered as `TVS'. The symbol `$\preceq$' placed between topologies means `is coarser than'; its opposite `$\succeq$', naturally, means `is finer than'.

We recall some common terminology relating to topological vector spaces $V$, assumed complex unless specified otherwise (virtually any textbook account will do for at least some version of each of the following terms).

\begin{definition}\label{def:settypes}
  A subset $K\subseteq V$ of a locally convex TVS is
  \begin{enumerate}  [(a)]

  \item  {\it convex} \cite[\S 15.10]{koeth_tvs-1} if it is closed under taking binary convex combinations
    \begin{equation*}
      K^2\ni (x,y)\xmapsto{\quad}\alpha x+(1-\alpha)y,\quad \alpha\in[0,1];
    \end{equation*}

  \item {\it balanced} \cite[Definition 3.2]{trev_tvs} or {\it circled} \cite[\S 15.1 (2)]{koeth_tvs-1} if it is closed under scaling by $\alpha$ with $|\alpha|\le 1$;

  \item {\it absolutely convex} \cite[\S 15.10]{koeth_tvs-1} if it is closed under linear combinations of the form
    \begin{equation*}
      K^2\ni (x,y)\xmapsto{\quad}\alpha x+\beta y,\quad |\alpha|+|\beta| \le 1.
    \end{equation*}
    Equivalently \cite[\S I.1, preceding Lemma 1]{rr_tvs}, absolutely convex = convex and balanced; 

  \item {\it absorbing} \cite[Definition 3.1]{trev_tvs} or {\it absorbent} \cite[\S 15.1 (2)]{koeth_tvs-1} if
    \begin{equation*}
      \forall x\in V,\quad \lambda x\in K\text{ whenever $|\lambda|$ is sufficiently small};
    \end{equation*}

  \item a {\it barrel} (\cite[Definition 7.1]{trev_tvs}, \cite[\S 21.2]{koeth_tvs-1}) if it is absolutely convex, absorbing and closed.

  \item\label{item:bdd} {\it bounded} (\cite[Definition 14.1]{trev_tvs}, \cite[\S 15.6]{koeth_tvs-1}) if it is {\it absorbed} by every origin neighborhood $U$:
    \begin{equation*}
      K\subseteq \lambda U
      \text{ if $|\lambda|$ is sufficiently large.}
    \end{equation*}
  \end{enumerate}  
\end{definition}


\section{Localized pro-$C^*$ topologies}\label{se:loc}

As explained above, the central theme here is \cite[Corollary 2.7]{tay_strict}, showing that the strict topology $\beta$ on the multiplier algebra $M(A)$ of a $C^*$-algebra $A$ equals its own localization on norm-bounded subsets of $M(A)$: the finest locally convex topology on $M(A)$ agreeing with $\beta$ on bounded subsets of $M(A)$ is $\beta$ itself.

\Cref{qu:beta} fits into a broader pattern: as noted in the proof of \cite[Corollary 2.3]{tay_strict}, for plain $C^*$-algebras $\beta$ and $\tau$ have the same bounded subsets (as a consequence of the uniform boundedness principle \cite[\S IV.2, Corollary 2]{rr_tvs}); we will see in \Cref{pr:multsamebdd} that the same holds for pro-$C^*$-algebras. For that reason, $\beta_b$ is the finest locally convex topology agreeing with $\beta$ on {\it its own} bounded sets; this is also what justifies the notation: the `b' subscript stands for `bounded'. The notational convention applies universally: for a locally convex topology $\sigma$ on a vector space we denote by $\sigma_b$ the finest locally convex topology agreeing with $\sigma$ on the latter's bounded subsets. One can thus also ask the obvious analogue:

\begin{question}\label{qu:tau}
  Is the pro-$C^*$-topology $\tau$ on the multiplier algebra $M(A)$ of a pro-$C^*$-algebra $A$ the finest locally convex topology $\tau_b$ agreeing with $\tau$ on the latter's bounded subsets?
\end{question}

\begin{remark}\label{re:tauisbeta}
  When $A$ is unital it is its own multiplier algebra and $\beta=\tau$; in that case, then, the two questions coalesce.
\end{remark}

We will see below (\Cref{ex:taunetaub}) that the questions generally have negative answers, but take the opportunity to examine cases where there are such topology coincidences.

\begin{remark}\label{re:tautaub}
  There is no reason, in general, why a locally convex topology $\tau$ would be equal to $\tau_b$. Consider, for instance, the {\it strong} and {\it ultra-strong} (or sometimes \cite[\S II.2]{tak1} {\it $\sigma$-strong}) topologies of \cite[\S I.3.1]{dixw}. 

  The two are distinct, with $\tau=$ strong topology strictly weaker, but agree on norm-bounded sets \cite[\S I.3.2]{dixw}. The uniform boundedness principle again shows that $\tau$ and the norm topology have the same bounded sets, so that $\tau_b$, being at least as strong as the ultra-strong topology, cannot coincide with $\tau$.

  Incidentally, this same example also shows that $\tau$ and $\tau_b$ need not even have the same continuous functionals \cite[\S I.3.3, Theorem 1]{dixw}. 
\end{remark}

We dispose first of one issue that might, conceivably, engender some confusion:

\begin{proposition}\label{pr:lcsamebdd}
  The bounded sets for a locally convex topology $\sigma$ are precisely those of $\sigma_b$.
\end{proposition}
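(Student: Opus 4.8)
The plan is to exploit two defining features of $\sigma_b$: that it is \emph{finer} than $\sigma$, and that it \emph{agrees} with $\sigma$ (i.e.\ induces the same trace topology) on every $\sigma$-bounded subset. The first holds because $\sigma$ trivially agrees with itself on bounded sets, so $\sigma \preceq \sigma_b$ by the maximality of $\sigma_b$; the second is the definition. One inclusion is then immediate: since every $\sigma$-neighborhood of the origin is a fortiori a $\sigma_b$-neighborhood, any $\sigma_b$-bounded set is absorbed by all $\sigma$-neighborhoods and is therefore $\sigma$-bounded.

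For the converse I would use the sequential criterion for boundedness, valid in an arbitrary TVS: a set $B$ is bounded precisely when $\lambda_n x_n \to 0$ for every null scalar sequence $\lambda_n \to 0$ and every choice of points $x_n \in B$. (The forward direction is absorption; the reverse is the contrapositive, choosing $x_n \in B \setminus nU$ for a balanced neighborhood $U$ witnessing unboundedness, so that $\tfrac1n x_n \notin U$.) It therefore suffices to show that every $\sigma$-bounded $B$ is $\sigma_b$-bounded, so I fix such a $B$, a null sequence $\lambda_n \to 0$, and points $x_n \in B$.

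The crux is to transfer the convergence $\lambda_n x_n \to 0$ from $\sigma$ to $\sigma_b$. Putting $M := \sup_n |\lambda_n| < \infty$, the set $C := \{0\} \cup \{\lambda_n x_n : n \ge 1\}$ lies in the union of $\{0\}$ with $\{\lambda x : |\lambda| \le M,\ x \in B\}$, hence is $\sigma$-bounded; moreover $\lambda_n x_n \to 0$ in $\sigma$ because $B$ is $\sigma$-bounded. All terms of the sequence together with its limit lie in $C$, and by the defining property of $\sigma_b$ the trace topologies of $\sigma$ and $\sigma_b$ on the $\sigma$-bounded set $C$ coincide; convergence in the subspace $C$ thus holds equally for both, giving $\lambda_n x_n \to 0$ in $\sigma_b$. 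The sequential criterion now yields that $B$ is $\sigma_b$-bounded, completing the argument.

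I expect the only genuine subtlety to be this convergence-transfer step, and specifically the care needed to package the relevant points into a single $\sigma$-bounded set $C$ on which the two trace topologies agree; once that is in place everything reduces to the general-TVS sequential boundedness criterion and the trivial comparison $\sigma \preceq \sigma_b$. It is worth noting that no metrizability hypothesis enters, since that criterion is available in any topological vector space.
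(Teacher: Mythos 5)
Your proof is correct, and it takes a genuinely different route from the paper's. The paper argues via an explicit description of $\sigma_b$ near the origin: fixing a basis $\cP$ of continuous seminorms, it introduces gauges $\gamma:\cP\to\bR_{>0}$ with their balls $B_{\gamma}$, shows that the absolutely convex hulls $W_{\nu}=\mathrm{hull}\bigl(\bigcup_{\gamma}B_{\gamma}\cap V_{\gamma}\bigr)$ form a basis of $\sigma_b$-neighborhoods of $0$, and then observes that a $\sigma$-bounded set, suitably scaled, fits inside a single $B_{\gamma}\cap V_{\gamma}\subseteq W_{\nu}$, hence is absorbed by every basic $\sigma_b$-neighborhood. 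You bypass any description of $\sigma_b$-neighborhoods entirely: you invoke the sequential boundedness criterion valid in every TVS ($B$ is bounded iff $\lambda_n x_n\to 0$ for all null scalar sequences and $x_n\in B$) and transfer the convergence $\lambda_n x_n\to 0$ from $\sigma$ to $\sigma_b$ by packaging the sequence together with its limit into the single $\sigma$-bounded set $C=\{0\}\cup\{\lambda_n x_n\}$, on which the two trace topologies agree by the defining property of localization; all the small steps check out (the balanced-hull argument making $C$ bounded, the contrapositive construction $x_n\in B\setminus nU$ with balanced $U$, and the fact that convergence within $C$ passes between the ambient topologies). Your route is more elementary and strictly more general: it uses only that $\sigma_b$ is a vector topology finer than $\sigma$ agreeing with $\sigma$ on $\sigma$-bounded sets, so local convexity plays no role, and the same argument applies to localizations of arbitrary TVS topologies. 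What the paper's heavier approach buys is the explicit neighborhood basis $\{W_{\nu}\}$ itself, which is not throwaway scaffolding: these sets and their polars $W_{\nu}^{\circ}$ are precisely the objects driving \Cref{th:unifconvforfree}, so the paper's proof of \Cref{pr:lcsamebdd} doubles as setup for the main localization theorem, whereas your cleaner proof yields the proposition in isolation without producing that reusable description. (One point you take for granted, as does the paper: that the supremum $\sigma_b$ of all locally convex topologies agreeing with $\sigma$ on bounded sets still agrees with $\sigma$ there; this holds because traces commute with suprema of topologies, so no gap arises.)
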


We first need some notation and terminology.

\begin{definition}\label{def:gauge}
  Let $\cP$ be a basis \cite[Definition 7.7]{trev_tvs} of continuous seminorms on a locally convex topological vector space $(E,\sigma)$.
  \begin{enumerate}[(1)]

  \item A {\it gauge function} (or just plain {\it gauge}) for $\cP$ (or $E$, or $\sigma$) is a function $\gamma:\cP\to \bR_{>0}$. We might also use variants of this phrasing, such as {\it $E$- (or $\sigma$-)gauge}. 

  \item To each gauge $\gamma$ one can associate the {\it $\gamma$-ball}
    \begin{equation*}
      B_{\gamma}:=\{v\in E\ |\ p(v)\le \gamma(p(v)),\ \forall p\in \cP\}. 
    \end{equation*}

  \item To every function
    \begin{equation}\label{eq:gaugesel}
    \text{gauge }\gamma\xmapsto{\quad\nu\quad}\text{$\sigma$-$0$-neighborhood }V_{\gamma}\text{ in }E.
  \end{equation}
  we associate the absolute convex hull
  \begin{equation}\label{eq:wphi}
    W_{\nu}
    :=
    \mathrm{hull}\left(\bigcup_{\text{gauges }\gamma}B_{\gamma}\cap V_{\gamma}\right),
  \end{equation}
  of some use below. 
  \end{enumerate}
\end{definition}

\pf{pr:lcsamebdd}
\begin{pr:lcsamebdd}
  Because $\sigma_b$ is by definition stronger than $\sigma$, {\it its} bounded sets are certainly among those of $\sigma$. The implication of interest is thus
  \begin{equation*}
    \text{$\sigma$-bounded}
    \Longrightarrow
    \text{$\sigma_b$-bounded}.
  \end{equation*}
  Suppose $\sigma$ is defined \cite[Proposition 7.6]{trev_tvs} by a family of seminorms $p$ on the underlying vector space $E$. The absolute convex hulls \Cref{eq:wphi} form a basis of $\sigma_b$-0-neighborhoods as $\nu$ ranges over functions \Cref{eq:gaugesel}. Indeed:
  \begin{itemize}
  \item every absolutely convex $\sigma_b$-neighborhood of the origin contains some $W_{\nu}$ by definition;
  \item while the intersection of $W_{\nu}$ with any gauge ball $B_{\gamma_0}$ will be
    \begin{equation*}
      B_{\gamma_0}\bigcap \mathrm{hull}\left(V_{\gamma_0}\bigcup\text{some other set}\right),
    \end{equation*}
    with the right-hand absolutely convex hull being an open $\sigma$-origin-neighborhood.
  \end{itemize}  
  It is thus enough to argue that, having fixed the function $\nu$ and a $\sigma$-bounded set $B$, some scalar multiple of $B$ is contained in $W_{\nu}$. This, though, is immediate: $B$ itself, scaled, is contained in some $B_{\gamma}$. The $\sigma$-0-neighborhood
  \begin{equation*}
    V_{\gamma} = \nu\left(\gamma\right)
  \end{equation*}
  is then a 1-ball attached to some continuous seminorm $p$, and $B$ can always be scaled further to ensure that
  \begin{equation*}
    p(v)\le 1,\ \forall v\in B\text{ i.e. }B\subseteq V_{\gamma}
  \end{equation*}
  in addition to $B\subseteq B_{\gamma}$.
\end{pr:lcsamebdd}

Recall \cite[\S 28.1]{koeth_tvs-1} that a locally convex TVS is {\it bornological} if absolutely convex sets absorbing every bounded set are origin neighborhoods. Because every locally convex TVS has attached to it a {\it finest} locally convex topology admitting the same bounded sets and that topology is automatically bornological \cite[\S 28.2]{koeth_tvs-1}, \Cref{pr:lcsamebdd} (and the fact that metrizability entails being bornological \cite[\S 28.1 (4)]{koeth_tvs-1}) immediately implies

\begin{corollary}\label{cor:bornok}
  For a bornological (in particular, metrizable) locally convex TVS $(E,\sigma)$ we have $\sigma=\sigma_b$.  \qedhere
\end{corollary}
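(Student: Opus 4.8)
The plan is to establish the two comparisons $\sigma\preceq\sigma_b$ and $\sigma_b\preceq\sigma$ separately, the first being purely formal and the second being exactly where the bornologicity hypothesis enters. For the first, I would simply observe that $\sigma$ trivially agrees with itself on its bounded sets, so $\sigma$ is one of the locally convex topologies of which $\sigma_b$ is, by definition, the finest; hence $\sigma\preceq\sigma_b$ with no hypotheses whatsoever. All of the content is therefore concentrated in the reverse comparison $\sigma_b\preceq\sigma$, which amounts to continuity of the identity map $\id\colon(E,\sigma)\to(E,\sigma_b)$.

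To secure that continuity I would invoke the defining feature of bornological spaces recalled just above: a linear map out of $(E,\sigma)$ into any locally convex TVS is continuous as soon as it carries bounded sets to bounded sets. By \Cref{pr:lcsamebdd} the $\sigma$-bounded and $\sigma_b$-bounded subsets of $E$ coincide, so $\id$ sends each $\sigma$-bounded set to that very same set, now viewed as $\sigma_b$-bounded. Thus $\id$ maps bounded sets to bounded sets and, since $(E,\sigma)$ is bornological, it is continuous; this is precisely $\sigma_b\preceq\sigma$. Combining with the preceding paragraph yields $\sigma=\sigma_b$.

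Equivalently, one can route the argument through the bornologification: writing $\sigma^{\mathrm{born}}$ for the finest locally convex topology on $E$ sharing the bounded sets of $\sigma$ (which exists and is automatically bornological by the cited \S 28.2 of \cite{koeth_tvs-1}), \Cref{pr:lcsamebdd} places $\sigma_b$ among the topologies with that same collection of bounded sets, so $\sigma\preceq\sigma_b\preceq\sigma^{\mathrm{born}}$. A locally convex TVS is bornological exactly when it coincides with its bornologification, so the hypothesis gives $\sigma=\sigma^{\mathrm{born}}$ and the chain collapses to $\sigma=\sigma_b$.

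The parenthetical metrizable case then follows at once from the cited fact that metrizable locally convex TVSs are bornological, so it requires no separate treatment. I do not expect a genuine obstacle here, the substantive input—the equality of bounded sets in \Cref{pr:lcsamebdd}—having already been established. The only points demanding care are keeping the direction of $\preceq$ straight and noticing that the phrase ``agreeing with $\sigma$ on bounded sets'' already forces $\sigma\preceq\sigma_b$ \emph{before} any bornologicity is invoked, so that the hypothesis is needed solely for the finer-than comparison.
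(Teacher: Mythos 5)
Your proposal is correct, and your second (``equivalently'') paragraph is precisely the paper's own proof: the paper cites \cite[\S 28.2]{koeth_tvs-1} for the existence and automatic bornologicity of the finest locally convex topology sharing $\sigma$'s bounded sets, then uses \Cref{pr:lcsamebdd} to squeeze $\sigma_b$ between $\sigma$ and that bornologification, with \cite[\S 28.1 (4)]{koeth_tvs-1} handling the metrizable case exactly as you do. Your primary route---continuity of $\id\colon(E,\sigma)\to(E,\sigma_b)$ via the bounded-maps characterization of bornological spaces, which the paper itself recalls in the introduction---is an equivalent repackaging resting on the same two inputs (\Cref{pr:lcsamebdd} plus bornologicity), so there is no substantive difference in approach.
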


\begin{remark}\label{re:bornmack}
  Recall that the {\it Mackey topology} \cite[\S 21.4]{koeth_tvs-1} on a locally convex topological vector space $E$ is the topology of uniform convergence on absolutely convex weak$^*$-compact subsets of the continuous dual $E^*$. It is also the finest locally convex topology on $E$ that gives the same continuous dual as the original topology (the celebrated {\it Mackey-Arens theorem} \cite[\S 21.4 (2)]{koeth_tvs-1}).
  
  Every bornological space is {\it Mackey}, in the sense that its original topology coincides with its Mackey topology; this follows, say, from the definition of a {\it quasi-barreled space} in \cite[\S 23.4]{koeth_tvs-1} as a space equipped with a certain topology that is always \cite[\S 21.5 (2)]{koeth_tvs-1} than the Mackey topology, together with the fact \cite[\S 28.1 (1)]{koeth_tvs-1} that being bornological implies being quasi-barreled. 
\end{remark}

Also in the spirit of \Cref{pr:lcsamebdd}:

\begin{proposition}\label{pr:multsamebdd}
  For a pro-$C^*$-algebra $A$, the subsets of $M(A)$ bounded for the topologies $\beta$ (strict), $\tau$ (pro-$C^*$), $\beta_b$ and $\tau_b$ all coincide.
\end{proposition}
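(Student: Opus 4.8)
The plan is to deduce the four-way coincidence from two ingredients already available: \Cref{pr:lcsamebdd}, which equates the bounded sets of any locally convex $\sigma$ with those of $\sigma_b$, and the classical plain-$C^*$ statement recalled before \Cref{qu:tau}. Applying \Cref{pr:lcsamebdd} once to $\sigma=\beta$ and once to $\sigma=\tau$ identifies the $\beta_b$-bounded sets with the $\beta$-bounded ones and the $\tau_b$-bounded sets with the $\tau$-bounded ones, so the entire statement collapses to the single assertion that $\beta$ and $\tau$ have the same bounded subsets of $M(A)$. Since on each $M(A_p)$ the strict topology is coarser than the norm topology, passing to projective limits gives $\beta\preceq\tau$; hence every $\tau$-bounded set is automatically $\beta$-bounded, and only the reverse implication requires argument.

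For that reverse implication I would exploit that $\beta$ and $\tau$ are projective-limit topologies carried by the \emph{same} realization $M(A)\cong\varprojlim_p M(A_p)$ and the \emph{same} family of projections $\pi_p\colon M(A)\to M(A_p)$, the sole difference being that the factors $M(A_p)$ carry their strict topology in the case of $\beta$ and their norm topology in the case of $\tau$. The standard description of bounded sets in a projective limit — equivalently, in a linear subspace of a product of locally convex spaces, where a set is bounded precisely when each of its coordinate projections is — then says that $B\subseteq M(A)$ is $\tau$-bounded iff each $\pi_p(B)$ is norm-bounded in $M(A_p)$, and $\beta$-bounded iff each $\pi_p(B)$ is strictly bounded in $M(A_p)$.

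This reduces the whole question to the level of the individual $C^*$-algebras: for each index $p$, the subset $\pi_p(B)$ of the $C^*$-algebra $M(A_p)$ is norm-bounded if and only if it is strictly bounded. That is exactly the plain-$C^*$ equivalence noted in the proof of \cite[Corollary 2.3]{tay_strict}, whose content is an application of the uniform boundedness principle to the left- and right-multiplication operators attached to elements of $\pi_p(B)$, together with the fact that for $m$ in the multiplier algebra the operator norm of multiplication by $m$ recovers $\|m\|$. Feeding this equivalence back through the projective-limit characterization yields $\beta$-bounded $=$ $\tau$-bounded, and combined with the two applications of \Cref{pr:lcsamebdd} this gives the full coincidence of all four classes of bounded sets.

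I do not expect a serious obstacle; the only points meriting care are confirming that $\tau$ and $\beta$ really are the projective limits of the norm, respectively strict, topologies over one common index system — so that the boundedness-by-projections lemma applies verbatim and the possibly non-surjective transition maps $M(A_i)\to M(A_j)$ play no role — and invoking the plain-$C^*$ equivalence at the level of each $M(A_p)$ rather than reproving it.
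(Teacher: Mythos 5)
Your proposal is correct and follows essentially the same route as the paper's proof: both reduce via \Cref{pr:lcsamebdd} to the single claim that $\beta$ and $\tau$ have the same bounded subsets, then detect boundedness coordinate-wise in the projective limit (the paper cites \cite[\S V.4, Proposition 14]{rr_tvs} for exactly your boundedness-by-projections step) and invoke the plain-$C^*$ equivalence from the proof of \cite[Corollary 2.3]{tay_strict} on each $M(A_p)$. The care points you flag at the end are handled in the paper precisely as you anticipate, by fixing a realization with surjective $\pi_p\colon A\to A_p$ \cite[Corollary 1.12]{phil1}, extending these to maps $M(A)\to M(A_p)$ \cite[Proposition 3.15 (1)]{phil1}, and appealing to \cite[Theorem 3.14]{phil1} so that both topologies arise over one common index system.
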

\begin{proof}
  Once we have shown that $\beta$ and $\tau$ have the same bounded sets the rest follows from \Cref{pr:lcsamebdd}, so we focus on that first claim.

  Again, $\tau$ being stronger than $\beta$, only the implication
  \begin{equation*}
    \text{$\beta$-bounded}
    \Longrightarrow
    \text{$\tau$-bounded}
  \end{equation*}
  is interesting. The ordinary $C^*$ version is recorded in \cite[proof of Corollary 2.3]{tay_strict}. In general, write
  \begin{equation*}
    A\cong \varprojlim_p A_p    
  \end{equation*}
  as an inverse limit with surjective $\pi_p:A\to A_p$ (as we always can \cite[Corollary 1.12]{phil1}), so that \cite[Theorem 3.14]{phil1} applies. A set $S\subset M(A)$ is the $\beta$-bounded if and only if its images
  \begin{equation*}
    \pi_p(S)\subset M(A_p) 
  \end{equation*}
  are all $\beta$-bounded by \cite[\S V.4, Proposition 14]{rr_tvs} (where we have extended $\pi_p$ to a map $M(A)\to M(A_p)$ \cite[Proposition 3.15 (1)]{phil1}). In turn, by the already-mentioned ordinary-$C^*$ remark, this is equivalent to all $\pi_p(S)$ being norm-bounded. But then another application of \cite[\S V.4, Proposition 14]{rr_tvs} equates this to $S$ being pro-$C^*$-bounded.
\end{proof}

We will thus speak of bounded subsets of $M(A)$ completely unambiguously: all topologies under consideration impose the same boundedness constraint.

The arguments in \cite{tay_strict} sometimes make explicit use of factorization theorems usually proved only in the more familiar Banach-space setup of the more familiar plain (as opposed to pro-)$C^*$-algebra theory, but those aspects of the argument can occasionally be dispensed with by reducing the pro-$C^*$ version back to the ordinary $C^*$ setup. A case in point is the following analogue of \cite[Corollary 2.3 and part of Corollary 2.7]{tay_strict}.

We will need the fact that a pro-$C^*$-algebra $A$ has an {\it approximate unit} (or {\it identity}) \cite[Corollary 3.12]{phil1}: an increasing net $(e_{\lambda})_{\lambda}$ of positive elements with $p(e_{\lambda})\le 1$ for all continuous $C^*$-seminorms $p\in S(A)$ with
\begin{equation*}
  ae_{\lambda},\ e_{\lambda}a\xrightarrow[\lambda]{\quad}a,\quad \forall a\in A.
\end{equation*}


Recall \cite[Example IV following Proposition 19.2]{trev_tvs} that the {\it strong dual} topology on the continuous dual of a topological vector space is the topology of uniform convergence on bounded sets. 

\begin{theorem}\label{th:bbsamefunc}
  Let $A$ be a pro-$C^*$-algebra and $M(A)$ its multiplier algebra.

  \begin{enumerate}[(1)]
  \item\label{item:embdiamond} The restriction $M(A)^*\to A^*$ then induces embeddings
    \begin{equation}\label{eq:embdiamond}
      \begin{tikzpicture}[auto,baseline=(current  bounding  box.center)]
        \path[anchor=base] 
        (0,0) node (l) {$M(A)_{\beta}^*$}
        +(3,.5) node (u) {$M(A)_{\beta_b}^*$}
        +(3,-.5) node (d) {$A_{\tau}^*$}
        +(6,0) node (r) {$A_{\tau_b}^*$,}
        ;
        \draw[right hook->] (l) to[bend left=6] node[pos=.5,auto] {$\scriptstyle $} (u);
        \draw[right hook->] (u) to[bend left=6] node[pos=.5,auto] {$\scriptstyle $} (r);
        \draw[->] (l) to[bend right=6] node[pos=.5,auto,swap] {$\scriptstyle \cong$} (d);
        \draw[right hook->] (d) to[bend right=6] node[pos=.5,auto,swap] {$\scriptstyle $} (r);
      \end{tikzpicture}
    \end{equation}
    homeomorphic onto their respective images when the spaces are all equipped with their strong dual topologies. 

  \item\label{item:t=tb} If furthermore $\tau$ and $\tau_b$ have the same continuous functionals, so that the lower right-hand arrow is an identification, so are the two top arrows.
  \end{enumerate}
\end{theorem}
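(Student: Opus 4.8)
The plan is to read the whole square off two inputs already available: the coincidence of the $\beta$-, $\tau$-, $\beta_b$- and $\tau_b$-bounded subsets of $M(A)$ (\Cref{pr:multsamebdd}), and Taylor's identification, for an ordinary $C^*$-algebra $B$, of $(M(B),\beta)^*$ with $B^*$ via restriction to $B$. The guiding principle is that the strong dual topology on any of the four duals is, by definition, that of uniform convergence on the \emph{common} bounded sets; hence any inclusion of one of these duals into another as subspaces of a shared algebraic dual is automatically a homeomorphism onto its image, the defining seminorms $\phi\mapsto\sup_B|\phi|$ being literally the same expressions. The two \emph{inclusions} are immediate instances: $\beta\preceq\beta_b$ and $\tau\preceq\tau_b$ give subspace inclusions $M(A)^*_\beta\subseteq M(A)^*_{\beta_b}$ and $A^*_\tau\subseteq A^*_{\tau_b}$, and since each pair shares its bounded sets these are strong-dual embeddings. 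That disposes of the top-left and bottom-right arrows.

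The technical heart, and the step I expect to be the main obstacle, is a single bounded approximate-unit estimate. Fix a bounded approximate unit $(e_\lambda)$ for $A$. For $m\in M(A)$ the net $(m e_\lambda)$ together with $m$ lies in one bounded subset of $M(A)$, on which $\beta_b$ agrees with $\beta$; since $m e_\lambda\to m$ strictly, any $\phi\in M(A)^*_{\beta_b}$ is $\beta$-continuous there and therefore satisfies $\phi(m)=\lim_\lambda\phi(m e_\lambda)$. Two consequences follow. First, $\phi|_A=0$ forces $\phi=0$, so restriction $M(A)^*_{\beta_b}\to A^*_{\tau_b}$ is injective (it is clearly well defined, a functional bounded on the bounded sets of $M(A)$ restricting to one bounded on those of $A$). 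Second, for a bounded $B\subseteq M(A)$ the set $B\cdot\{e_\lambda\}$ is bounded in $A$ and $\sup_{B}|\phi|\le\sup_{B\cdot\{e_\lambda\}}|\phi|$; this reverse seminorm control upgrades the automatic continuity of restriction into a topological embedding, settling the top-right arrow.

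For the bottom-left isomorphism I would write $A\cong\varprojlim_p A_p$ with surjective $\pi_p$, so that $\beta=\varprojlim_p\beta_p$ and every $\beta$-continuous functional factors as $\phi_p\circ\pi_p$ with $\phi_p\in(M(A_p),\beta_p)^*\cong A_p^*$ (Taylor). Passing to the colimit over $S(A)$ yields $M(A)^*_\beta\cong\varinjlim_p A_p^*\cong A^*_\tau$, the isomorphism being exactly restriction to $A$; it is a strong-dual homeomorphism by the principle of the first paragraph together with the same approximate-unit estimate (now applied to the globally $\beta$-continuous $\phi$). Commutativity of the square is the tautology that restricting to $A$ before or after enlarging the topology gives the same functional.

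Finally, part (2). Assume the bottom arrow $A^*_\tau\hookrightarrow A^*_{\tau_b}$ is onto, hence an isomorphism. Both composites around the square equal restriction $M(A)^*_\beta\to A^*_{\tau_b}$, which is then an isomorphism (bottom-left iso followed by bottom-right iso). Writing this composite as the top-left embedding $i$ followed by the top-right embedding $j$, with $j$ already known injective from the second paragraph, a short chase---bijectivity of $j\circ i$ together with injectivity of $j$ forces $j$ surjective, whence $j$ and then $i=j^{-1}\circ(j\circ i)$ are bijective---shows both top arrows are bijective; being embeddings, they are identifications.
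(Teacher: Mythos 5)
Your argument follows essentially the same route as the paper's: the coincidence of bounded sets (\Cref{pr:multsamebdd}) makes the two inclusion arrows strong-dual embeddings for free; your estimate $\sup_B|\phi|\le\sup_{B\cdot\{e_\lambda\}}|\phi|$ is exactly the quantitative form of the paper's net argument (there phrased as: $f_\alpha\to 0$ uniformly on bounded subsets of $A$ implies uniformly on bounded $B\subset M(A)$, via the bounded set $\bigcup_\lambda Be_\lambda$ and strict continuity); the bottom-left isomorphism is obtained, as in the paper, by factoring $\beta$-continuous functionals through the stages $M(A_p)$ and invoking Taylor's result $(M(A_p),\beta)^*\cong A_p^*$ stage-wise; and part (2) is the same diagram chase the paper runs (there compressed to: both lower arrows become isomorphisms, hence so do the top embeddings).

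One justification in your second paragraph is wrong, although the claim it supports is true and your own tools repair it. You declare the restriction $M(A)^*_{\beta_b}\to A^*_{\tau_b}$ ``clearly well defined'' because a functional bounded on the bounded sets of $M(A)$ restricts to one bounded on the bounded sets of $A$. But boundedness on bounded sets does not imply $\tau_b$-continuity: the localization $\tau_b$ is in general strictly coarser than the bornologification of $\tau$ (the finest locally convex topology with the \emph{same} bounded sets), and it is the latter whose dual consists of the bounded-on-bounded functionals. For a concrete failure of the principle you invoke, take a non-reflexive dual Banach space with its weak$^*$ topology: every norm-continuous functional is bounded on bounded sets, yet by the Krein--\v{S}mulian theorem the functionals continuous for the localization (the bounded-weak$^*$ topology) are only the evaluations at the predual. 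The correct argument is already latent in your approximate-unit paragraph: a bounded subset of $A$ is bounded in $M(A)$, where $\beta_b$ agrees with $\beta$, so $\phi|_A$ is $\beta$-continuous, hence $\tau$-continuous (since $\beta\preceq\tau$ on $A$), on every bounded subset of $A$; and a linear functional that is $\tau$-continuous on every bounded set is $\tau_b$-continuous, because adjoining the seminorm $|\phi|$ to $\tau_b$ leaves the restriction to each bounded set equal to that of $\tau$, so maximality of $\tau_b$ forces $\phi$ to be $\tau_b$-continuous. With that substitution your proof is complete and coincides in substance with the paper's.
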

\begin{proof}
  \begin{enumerate}[]
  \item {\bf \Cref{item:embdiamond}} The north-eastward embeddings follow from the strength ordering of the topologies:
    \begin{equation*}
      \beta\preceq\beta_b
      \quad\text{and}\quad
      \tau\preceq\tau_b
    \end{equation*}
    On the other hand, the south-eastward arrows are embeddings because $A$ is $\beta_b$- (hence also $\beta$-)dense in $M(A)$: because approximate identities $(e_{\lambda})$ consist (by \cite[Definition 3.10]{phil1}) of uniformly bounded elements in the sense that
    \begin{equation*}
      p(e_{\lambda})\le 1,\ \forall \lambda,\ \forall p\in S(A),
    \end{equation*}
    the density argument used to prove \cite[Theorem 3.14 (6)]{phil1} works just as well for $\beta_b$.

    The strong topology on either $M(A)_{\beta}^*$ or $M(A)_{\beta_b}^*$ equals that inherited from the strong topology on $M(A)^*_{\tau}$ and $M(A)^*_{\tau_b}$ respectively, because $\beta$, $\beta_b$, $\tau$ and $\tau_b$ all have the same bounded sets (\Cref{pr:multsamebdd}). That the embeddings are also homeomorphisms onto their images amounts to this (focusing on the plain rather than localized topologies; the argument goes through all the same): if
    \begin{equation*}
      f_{\alpha}\xrightarrow[\quad\alpha\quad]{} 0,\quad f_{\alpha}\in M(A)_{\beta}^*
    \end{equation*}
    uniformly on every bounded subset of $A$, then this happens uniformly on bounded subsets $B\subset M(A)$. 

    This is where the approximate unit $(e_{\lambda})_{\lambda\in \Lambda}$ will come in handy again: the assumption implies that for large $\alpha$ the functionals $f_{\alpha}$ will be small on the bounded subset
    \begin{equation*}
      \bigcup_{\lambda}Be_{\lambda} = \left\{xe_{\lambda}\ |\ x\in B,\ \lambda\in \Lambda\right\}\subset A,
    \end{equation*}
    hence also on $B$ itself by the assumed strict continuity of $f_{\alpha}$:

    \begin{equation*}
      xe_{\lambda}\xrightarrow[\lambda]{\quad}x\text{ strictly}
      \quad
      \Longrightarrow
      \quad
      f_{\alpha}(xe_{\lambda})\xrightarrow{\quad} f_{\alpha}(x). 
    \end{equation*}
    As for the bijectivity of the bottom left-hand arrow in \Cref{eq:embdiamond}:
    
    Every element of $M(A)_{\tau}^*$ factors through some $M(A)\to M(A_p)$ because $\tau$ is by definition induced by the norm topologies on $M(A_p)$ and every functional is continuous with respect to one of those seminorms \cite[Proposition 7.7]{trev_tvs}. The same goes for the functionals in $A_{\tau}^*$ (each factors through some $A\to A_p$) as well as those in $M(A)_{\beta}^*$ (since $\beta$ is the inverse limit of the strict topologies on $M(A_p)$ \cite[Theorem 3.14 (2)]{phil1}). The restriction $M(A)_{\beta}^*\to A_{\tau}^*$ can thus be recovered as a limit of restrictions
    \begin{equation*}
      M(A_p)^*\xrightarrow{\quad} M(A_p)_{\beta}^*\cong A_p,\quad p\in S(A),
    \end{equation*}
    the isomorphism being \cite[Corollary 2.3]{tay_strict} (because $A_p$ is a $C^*$-algebra).
    
  \item {\bf \Cref{item:t=tb}} This follows immediately from part \Cref{item:embdiamond}: under the additional hypothesis the lower arrows in \Cref{eq:embdiamond} are both isomorphisms (of topological vector spaces), hence so are the top embeddings.
  \end{enumerate}
\end{proof}

The hypothesis of \Cref{th:bbsamefunc} \Cref{item:t=tb} need not hold, even for unital $A$ (in which case $M(A)=A$ and $\tau=\beta$, so that the {\it conclusion} fails to hold as well):

\begin{example}\label{ex:taunetaub}
  Consider the pro-$C^*$-algebra $A$ of \cite[Example 2.11]{phil1}, or any number of analogues: it is the usual $C(X)$ for some compact metric space $X$, with the pro-$C^*$ topology induced by the countable closed subsets $F\subseteq X$ with finitely many cluster points (i.e. the topology of uniform convergence on each such set). $A$ being unital in this case, it equals its multiplier algebra and the strict topology specializes back to the pro-$C^*$ topology $\tau$.

  As soon as $X$ is uncountable, $\tau$ and $\tau_b$ will not have the same continuous functionals. To see this, consider subsets $F_n\subset X$, $n\in \bZ_{>0}$, each consisting of a sequence and its limit $\ell_n$, so that $\ell_n\to \ell\in X$; then set
  \begin{equation*}
    K:=\bigcup_{n}F_n\cup\{\ell\}
  \end{equation*}
  and consider a probability measure $\mu$ supported on $K$ and assigning mass $\frac 1{2^n}$ to $F_n$. As a functional on $C(X)$, $\mu$ is easily seen to be $\tau_b$-continuous on every bounded subset (here, boundedness specializes back to the familiar notion of uniform boundedness on $X$); nevertheless, $\mu\in C(X)_{\tau_b}^*$ will not be $\tau$-continuous because $\mu$ is not supported on any finite-cluster $F$.
\end{example}


\begin{remark}\label{re:bornprocastok}

  {\it Bornological} pro-$C^*$-algebras are better behaved, in that \Cref{cor:bornok} shows that in that case $\tau=\tau_b$. In particular, this applies to {\it $\sigma$-$C^*$-algebras} \cite[\S 5]{phil1}, i.e. pro-$C^*$-algebras whose topology is induced by countably many $C^*$-seminorms: since $\sigma$-$C^*$-algebras are metrizable \cite[\S 18.2 (2)]{koeth_tvs-1}, they are bornological \cite[\S 28.1 (4)]{koeth_tvs-1}.
\end{remark}

The characterization of $\beta$-equicontinuous subsets of $M(A)_{\beta}^*$ of \cite[Theorem 2.6]{tay_strict} also goes through, provided we interpret its condition (2) appropriately: where that result requires uniform boundedness (which makes sense for functionals on a Banach space, such as $M(A)$ for ordinary $C^*$-algebras $A$) we require $\tau$-equicontinuity:

\begin{theorem}\label{th:equicontonma}
  Let $A$ be a pro-$C^*$-algebra with an approximate unit $(e_{\lambda})$ and $M(A)$ its multiplier algebra. For a set $S\subset M(A)_{\beta}^*$ of strictly-continuous functionals, the following conditions are equivalent:
  \begin{enumerate}[(a)]
  \item\label{item:bequi} $S$ is $\beta$-equicontinuous.
  \item\label{item:tequi} $S$ is $\tau$-equicontinuous and
    \begin{equation}\label{eq:flambda0}
      (1-e_{\lambda})f(1-e_{\lambda})
      \xrightarrow[\quad\lambda\quad]{}0
      \text{ strongly and uniformly in }f\in S. 
    \end{equation}
  \end{enumerate}
\end{theorem}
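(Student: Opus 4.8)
The plan is to bootstrap from the ordinary-$C^*$ result \cite[Theorem 2.6]{tay_strict}, working along the realization $A\cong\varprojlim_{p\in S(A)}A_p$, $M(A)\cong\varprojlim_p M(A_p)$ of \cite[Theorem 3.14]{phil1}: here $\beta$ is the inverse limit of the strict topologies $\beta_p$ on the $M(A_p)$ and $\tau$ the inverse limit of the norm topologies, and I write $\pi_p\colon M(A)\to M(A_p)$, $\bar e_\lambda:=\pi_p(e_\lambda)$ (an approximate unit of $A_p$). The identifications I will lean on are the isometry $M(A_p)_\beta^*\cong A_p$ of \cite[Corollary 2.3]{tay_strict} and $M(A)_\beta^*\cong A_\tau^*$ from \Cref{th:bbsamefunc}, with the approximate unit deployed exactly as in the proof of that theorem.

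\Cref{item:bequi}$\Rightarrow$\Cref{item:tequi} is the soft direction. That $S$ is $\tau$-equicontinuous is automatic from $\beta\preceq\tau$, since a $\beta$-$0$-neighborhood is a fortiori a $\tau$-$0$-neighborhood. For the decay, $\beta$-equicontinuity supplies $a_i\in A$, seminorms $p_i\in S(A)$ and $C>0$ with $|f(x)|\le C\max_i\big(p_i(a_ix)+p_i(xa_i)\big)$ for all $f\in S$, $x\in M(A)$; estimating $f\big((1-e_\lambda)x(1-e_\lambda)\big)$ over a bounded set $B\subseteq M(A)$ and invoking submultiplicativity together with $p_i(a_i-a_ie_\lambda)\to0$ gives $\sup_{f\in S}\sup_{x\in B}\big|f\big((1-e_\lambda)x(1-e_\lambda)\big)\big|\to0$. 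This is \Cref{eq:flambda0} in the strong dual topology, uniformly in $f$, and needs no reduction.

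For \Cref{item:tequi}$\Rightarrow$\Cref{item:bequi} I would localize to a single level. As the defining seminorms of $\tau$ are directed, $\tau$-equicontinuity produces one $q\in S(A)$ and $C>0$ with $|f(x)|\le C\|\pi_q(x)\|$ for all $f\in S$; since $\pi_q\colon A\to A_q$ is onto, each $f$ restricts to $g_f\in A_q^*$ with $\|g_f\|\le C$ and $f|_A=g_f\circ\pi_q$. The crux is to convert \Cref{eq:flambda0} into the genuine $A_q^*$-norm statement $\big\|(1-\bar e_\lambda)g_f(1-\bar e_\lambda)\big\|\to0$ uniformly in $f$: this follows once one knows that the unit ball of $A_q$ is the image under $\pi_q$ of a bounded subset of $A$, for then the supremum defining the $A_q^*$-norm is exactly a supremum over $\pi_q(B)$ for a bounded $B\subseteq A$, controlled by \Cref{eq:flambda0}. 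Granting the norm decay, the $g_f$ are bounded and strictly continuous (a bounded functional with this decay lies in $M(A_q)_\beta^*\cong A_q$, the strict dual being norm-closed in $M(A_q)^*$), so the converse half of \cite[Theorem 2.6]{tay_strict} at the $C^*$-algebra $A_q$ renders $\{g_f\}$ $\beta_q$-equicontinuous. Pulling this back along $\pi_q$ places $S$ in the polar of a single basic $\beta$-neighborhood, i.e. exhibits it as $\beta$-equicontinuous.

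The one genuinely non-formal point is the norm-controlled lifting just invoked: that the unit ball of $A_q$ lifts into a bounded subset of $A$. This is not a consequence of surjectivity of $\pi_q$ alone (it would fail for a bare continuous linear surjection, reflecting that $M(A)\to M(A_q)$ itself need not be onto), but it does hold here by continuous functional calculus — for a self-adjoint $b\in A_q$ with $\|b\|\le1$ one lifts to a self-adjoint $a'\in A$ and sets $a=\phi(a')$ for the truncation $\phi(t)=\max(-1,\min(1,t))$, so that $\pi_q(a)=b$ while $p(a)\le1$ for every $p\in S(A)$, with the general case following up to a fixed constant. This lifting is what reconciles the strong dual formulation of \Cref{eq:flambda0} with the level-$q$ dual-norm decay that \cite[Theorem 2.6]{tay_strict} consumes; everything else — directedness of the seminorm family, the Hahn--Banach/polar bookkeeping, the norm-closedness of the strict dual — is routine.
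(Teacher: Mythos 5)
Your proposal is correct and takes essentially the same route as the paper: regard the functionals as functionals on $A$ via \Cref{th:bbsamefunc}, note that either form of equicontinuity forces $S$ to vanish on $\ker q$ for a single $q\in S(A)$, and reduce to \cite[Theorem 2.6]{tay_strict} at the quotient $C^*$-algebra $A_q$. The paper's proof is a two-sentence version of this reduction, and your bounded lifting of the unit ball of $A_q$ via functional-calculus truncation --- reconciling the strong-dual form of \Cref{eq:flambda0} with the level-$q$ norm decay --- is a correct filling-in of a step the paper leaves implicit.
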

\begin{proof}
  Strictly-continuous functionals can be regarded as functionals on $A$ by \Cref{th:bbsamefunc}, and we do so regard them. Whether one assumes $\tau$- or $\beta$-equicontinuity, $S$ will vanish on $\ker p$ for some continuous $C^*$-seminorm $p\in S(A)$, and the statement reduces to its plain $C^*$ counterpart \cite[Theorem 2.6]{tay_strict}.
\end{proof}

\begin{remark}\label{re:conway}
  The commutative precursor to the characterization of strict equicontinuity in \cite[Theorem 2.6]{tay_strict} is \cite[Theorem 2]{conw_strict-1}, on spaces of measures on a locally compact topological space.
\end{remark}

A different choice for transporting \cite[Theorem 2.6, condition (2)]{tay_strict} to the present setting would have been to require that $S$ be {\it bounded} in the strong topology (i.e. that $S$ be uniformly bounded on every $\tau$-bounded subset). The implication \Cref{item:bequi} $\Longrightarrow$ \Cref{item:tequi} of \Cref{th:equicontonma} still holds then, since $\tau$-equicontinuity certainly implies strong boundedness. The converse does {\it not} hold though, even in the unital case.

\begin{example}\label{ex:fincluster}
  The selfsame \Cref{ex:taunetaub} will do, again with uncountable $X$. Consider the family $S\subset A^*$ of all probability measures concentrated on sets $F$ as above. That family is
  \begin{itemize}
  \item bounded on every $\tau$-bounded subset of $A$, since such subsets are easily seen to be bounded in the usual uniform topology on $A=C(X)$;
  \item but not equicontinuous, since there is no single countable, finite-cluster $F$ on which all elements of $S$ are concentrated.
  \end{itemize}
\end{example}

We will see shortly that the second condition of \Cref{th:equicontonma} \Cref{item:tequi} essentially ``comes along for the ride'' in all cases of interest. Moreover, unwinding the proof of \cite[Corollary 2.7]{tay_strict} (which in turn follows the general plan of \cite[Theorem]{dorr_loc}), gives a handy criterion for the equality $\beta=\beta_b$ of topologies on $M(A)$.

Recall the notion of {\it polar} \cite[Definition 19.1]{trev_tvs} of a set $V\subset E$ of a topological vector space:
\begin{equation*}
  V^{\circ}:=\{f\in E^*\ |\ |f(x)|\le 1,\ \forall x\in V\}.
\end{equation*}

\begin{theorem}\label{th:unifconvforfree}
  Let $A$ be a pro-$C^*$ algebra with multiplier algebra $M(A)$.

  \begin{enumerate}[(1)]
  \item\label{item:allconv} For every function \Cref{eq:gaugesel} the polar
    \begin{equation*}
      W_{\nu}^{\circ} = \bigcap_{\text{gauges }\gamma}\left(B_{\gamma}\cap V_{\gamma}\right)^{\circ} \subset M(A)_{\beta_b}^*
    \end{equation*}
    for $W_{\nu}$ as in \Cref{eq:wphi} satisfies \Cref{eq:flambda0}. 

  \item\label{item:ifftauequicont} If $\tau$ and $\tau_b$ have the same continuous functionals, then the following conditions are equivalent:
    \begin{enumerate}[{(\ref{item:ifftauequicont}}a{)}]
    \item\label{item:bb} $\beta=\beta_b$. 
    \item\label{item:wtauequic} Every $W_{\nu}^{\circ}$ is $\tau$-equicontinuous.
    \item\label{item:wfactors} Every $W_{\nu}^{\circ}$ consists of functionals factoring through $A\to A_p$ (hence also $M(A)\to M(A_p)$) for some $p\in S(A)$. 
    \end{enumerate}    
  \end{enumerate}
\end{theorem}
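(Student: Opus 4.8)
The plan is to prove the two parts in order, with \Cref{item:allconv} carrying the analytic content and \Cref{item:ifftauequicont} assembling it with the machinery already in place. Throughout I identify $\beta_b$- and strictly-continuous functionals with functionals on $A$ via \Cref{th:bbsamefunc}, and I use freely that $\beta$, $\beta_b$, $\tau$, $\tau_b$ share the same bounded sets (\Cref{pr:multsamebdd}). I also recall from the proof of \Cref{pr:lcsamebdd} that the sets $W_\nu$ form a basis of absolutely convex $\beta_b$-neighborhoods of the origin, so that the $W_\nu^\circ$ are a cofinal family of $\beta_b$-equicontinuous sets.

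For \Cref{item:allconv}, fix $\nu$, a bounded set $B\subseteq M(A)$ and $\varepsilon>0$; the target is a $\lambda_0$ with $|f((1-e_\lambda)m(1-e_\lambda))|\le\varepsilon$ for all $\lambda\ge\lambda_0$, all $m\in B$, and all $f\in W_\nu^\circ$. The governing idea --- the gauge device behind \cite[Theorem]{dorr_loc} and \cite[Corollary 2.7]{tay_strict} --- is that $f\in W_\nu^\circ$ is bounded by $1$ on $B_\gamma\cap V_\gamma$ for every gauge $\gamma$, so it suffices to find a single $\gamma$ with $(1-e_\lambda)m(1-e_\lambda)\in\varepsilon(B_\gamma\cap V_\gamma)=\varepsilon B_\gamma\cap\varepsilon V_\gamma$ for large $\lambda$ and all $m\in B$. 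I would split this into two independent containments. The containment into $\varepsilon B_\gamma$ is cheap and holds for every $\lambda$: boundedness of $B$ gives $c_q:=\sup_{m\in B}q(m)<\infty$ for each defining seminorm $q$, and since $q((1-e_\lambda)m(1-e_\lambda))\le q(m)$ (the images of $e_\lambda$ being positive contractions in each $M(A_p)$), the choice $\gamma(q):=\max(c_q/\varepsilon,\,1)$ already forces $(1-e_\lambda)m(1-e_\lambda)\in\varepsilon B_\gamma$. With $\gamma$ fixed, $V_\gamma=\nu(\gamma)$ is a fixed strict neighborhood, and the containment into $\varepsilon V_\gamma$ is where the approximate unit enters: testing $(1-e_\lambda)m(1-e_\lambda)$ against the finitely many strict seminorms $x\mapsto p(a_i x),\,p(x a_i)$ defining $V_\gamma$ and using submultiplicativity of the $C^*$-seminorms together with $p(a_i(1-e_\lambda))\to 0$ and $p((1-e_\lambda)a_i)\to 0$, against the bound $\sup_{m\in B}p(m)<\infty$, yields $(1-e_\lambda)m(1-e_\lambda)\in\varepsilon V_\gamma$ uniformly over $m\in B$ for large $\lambda$.

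For \Cref{item:ifftauequicont} I would first note that under the hypothesis the top edge of the diamond \Cref{eq:embdiamond} becomes an identification (\Cref{th:bbsamefunc} \Cref{item:t=tb}), so $M(A)_\beta^*=M(A)_{\beta_b}^*$; in particular every $W_\nu^\circ$ consists of strictly-continuous functionals and \Cref{th:equicontonma} is applicable to it. Since the $W_\nu$ are a neighborhood basis and an absolutely convex set is a $\beta$-neighborhood exactly when its polar is $\beta$-equicontinuous, $\beta=\beta_b$ holds iff every $W_\nu^\circ$ is $\beta$-equicontinuous. By \Cref{th:equicontonma} this means $\tau$-equicontinuity together with \Cref{eq:flambda0}; but \Cref{eq:flambda0} is automatic by part \Cref{item:allconv}, so the condition collapses to $\tau$-equicontinuity, giving \Cref{item:bb}$\Leftrightarrow$\Cref{item:wtauequic}. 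Finally \Cref{item:wtauequic}$\Leftrightarrow$\Cref{item:wfactors} is the standard description of equicontinuity for the projective limit $\tau$: a $\tau$-equicontinuous set lies in the polar of some $\{m:\|\pi_p(m)\|\le\varepsilon\}$, whose members vanish on $\ker\pi_p$ and hence factor through $A\to A_p$; conversely, if every element of $W_\nu^\circ$ factors through a single $A\to A_p$, then $W_\nu^\circ$ is weak$^*$-bounded (being the polar of an absorbing set) and therefore, via surjectivity of $A\to A_p$, norm-bounded as a family of functionals on $A_p$, whence $\tau$-equicontinuous.

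The main obstacle is the uniformity demanded in \Cref{item:allconv}: the sets $W_\nu^\circ$ are in general not $\beta$-equicontinuous, so one cannot lean on the strict continuity of a single functional, nor on convergence for one fixed $m$. What makes the estimate go through is that the two constraints cutting out $B_\gamma\cap V_\gamma$ decouple cleanly --- boundedness of $B$ absorbs the (norm-)size uniformly through the gauge ball $B_\gamma$, while the approximate unit kills the strict tail through $V_\gamma$ --- and that a single gauge $\gamma$, chosen from $B$ and $\varepsilon$ alone, serves every $m\in B$ at once, which is exactly what delivers uniformity in both $m$ and $f$.
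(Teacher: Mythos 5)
Your proposal is correct and, in substance, follows the paper's own proof. Part (1) is the paper's gauge-scaling argument with different bookkeeping: you build a single gauge $\gamma$ from the bounded set $B$ and $\varepsilon$ (via $\gamma(q)=\max(c_q/\varepsilon,1)$), where the paper equivalently fixes a gauge $\gamma_0$ whose ball absorbs $B$ and feeds the scaled gauge $C\gamma_0$ into $\nu$; your ``decoupling'' of the two constraints, with the gauge ball absorbing size and the approximate unit killing the strict tail using $p(1-e_\lambda)\le 1$, is exactly the mechanism behind the paper's containment $C(1-e_\lambda)x(1-e_\lambda)\in B_{C\gamma_0}\cap V_{p\mid (a_i)}$. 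In part (2) your equivalence (2a)$\Leftrightarrow$(2b) via \Cref{th:bbsamefunc} and \Cref{th:equicontonma} is the paper's, but one step is imprecise as stated: ``an absolutely convex set is a $\beta$-neighborhood exactly when its polar is $\beta$-equicontinuous'' is false in general (a dense proper absolutely convex subspace has polar $\{0\}$, which is equicontinuous); equicontinuity of $W_\nu^{\circ}$ only makes the \emph{closed} hull a neighborhood, which is why the paper works with the basis of closed hulls $\overline{W_\nu}^{\beta_b}$ and the bipolar theorem --- harmless here precisely because $\beta$ and $\beta_b$ share continuous functionals, so their closed convex sets coincide (cf.\ \Cref{re:dorroh-no-closure}), but the closure has to enter somewhere. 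The one genuine divergence is (2c)$\Rightarrow$(2b): you argue that $W_\nu^{\circ}$ is weak$^*$-bounded (since $W_\nu$ is absorbing) and then apply Banach--Steinhaus on the Banach space $A_p$ to get uniform norm bounds, hence $\tau$-equicontinuity; the paper instead closes the loop as (2c)$\Rightarrow$(2a) by invoking Taylor's result that $\beta=\beta_b$ on $M(A_p)$. Your route is valid and self-contained at this step --- provided, as you implicitly assume, that (2c) is read as factoring through \emph{norm-continuous} functionals on $A_p$ (which is the intended reading, by the factorization criterion of \cite[Proposition 7.7]{trev_tvs}), so that Banach--Steinhaus applies --- and it buys independence from the $C^*$-level localization theorem at this one sub-implication, though Taylor's theorem is of course still needed elsewhere in the paper.
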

\begin{proof}
  \begin{enumerate}[]
  \item {\bf \Cref{item:ifftauequicont}, assuming \Cref{item:allconv}} Given the assumption that $\tau$ and $\tau_b$ have the same continuous functionals, we know from \Cref{th:bbsamefunc} that $M(A)_{\beta}^*$ and $M(A)_{\beta_b}^*$ are equal as subspaces of $M(A)^*$.

    The $\beta_b$-closed absolute convex hulls
    \begin{equation*}
      \overline{W_{\nu}}^{\beta_b}
      =
      \overline{\mathrm{hull}\left(\bigcup_{\gamma}B_{\gamma}\cap V_{\gamma}\right)}^{\beta_b}
    \end{equation*} 
    form a fundamental system of closed $\beta_b$-neighborhoods of $0\in M(A)$ (hence also a fundamental system of neighborhoods \cite[\S 18.1 (3)]{koeth_tvs-1}), so \Cref{item:bb} is equivalent to the claim that each such hull contains some $\beta$-0-neighborhood
    \begin{equation}\label{eq:vpa}
      V_{p\mid (a_i)} = V_{p\mid a_1,\cdots,a_n}:=\{x\in M(A)\ |\ p(xa_i)\le 1,\ p(a_i x)\le 1\}
    \end{equation}
    for a continuous $C^*$-seminorm $p\in S(A)$ and $a_i\in A$. $W_{\nu}$ being $\beta_b$-closed and absolutely convex, it is its own double polar (the {\it bipolar} of \cite[Definition 35.1]{trev_tvs}) by \cite[Proposition 35.3]{trev_tvs}. We thus have 
    \begin{align*}
      W_{\nu}=W_{\nu}^{\circ\circ}\supseteq V_{p\mid (a_i)}
      &\xLeftrightarrow{\quad}\numberthis\label{eq:wcirccirc}
      W_{\nu}^{\circ}\subseteq V_{p\mid (a_i)}^{\circ}\\
      &\xLeftrightarrow{\quad\text{\cite[Proposition 32.7]{trev_tvs}}\quad}
      W_{\nu}^{\circ}\text{ is $\beta$-equicontinuous}.
    \end{align*}
    Because we are assuming we have already proven part \Cref{item:allconv}, \Cref{th:equicontonma} equates this to the $\tau$-equicontinuity of $W_{\nu}^{\circ}$; we thus indeed have $\text{\Cref{item:bb}}\xLeftrightarrow{\quad}\text{\Cref{item:wtauequic}}$.

    It is obvious that \Cref{item:bb} (via \Cref{eq:wcirccirc}) implies \Cref{item:wfactors}, since of course all elements of 
    \begin{equation*}
      V_{p\mid (a_i)}^{\circ}\subset M(A)_{\beta}^*
    \end{equation*}
    factor through $M(A)\to M(A_p)$. Conversely, if $W_{\nu}^{\circ}$ factors through $M(A)\to M(A_p)$ then \Cref{eq:wcirccirc} holds (for some neighborhood $V_{p\mid (a_i)}$) because we already know \cite[Corollary 2.7]{tay_strict} that $\beta=\beta_b$ on $M(A_p)$.

  \item {\bf \Cref{item:allconv}} Unpacked, the claim is that for every $A$-gauge $\gamma_0$ we have
    \begin{equation*}
      f\left((1-e_{\lambda})x(1-e_{\lambda})\right)\xrightarrow[\quad\lambda\quad]{}0
    \end{equation*}
    uniformly in $x\in B_{\gamma_0}$ and $f\in W_{\nu}^{\circ}$. The argument transports over from \cite[Corollary 2.7]{tay_strict} with only minor changes. We need
    \begin{equation}\label{eq:fsmall}
      \left|f\left(C(1-e_{\lambda})x(1-e_{\lambda})\right)\right|\le 1,\ \forall x\in B_{\gamma_0},\ \forall f\in W_{\nu}^{\circ}
    \end{equation}
    for sufficiently large $\lambda$ (henceforth $\lambda \gg 0$) and fixed but arbitrary (arbitrarily large, that is) $C\in \bR_{>0}$. The neighborhood $V_{C\gamma_0} = \nu(C\gamma_0)$ attached by $\nu$ to the scaled gauge $C\gamma_0$ contains some neighborhood of the form \Cref{eq:vpa}, and for $\lambda\gg 0$ we have
    \begin{equation*}
      p\left((1-e_{\lambda})a_i\right),\ 
      p\left(a_i(1-e_{\lambda})\right)
      \le \frac{1}{C\cdot \gamma_0(p)},\ \forall i.
    \end{equation*}
    This means that
    \begin{equation*}
      C(1-e_{\lambda})x(1-e_{\lambda})\in B_{C\gamma_0}\cap V_{p\mid (a_i)},
    \end{equation*}
    hence \Cref{eq:fsmall} by
    \begin{equation*}
      f\in W_{\nu}^{\circ}\subseteq \left(B_{C\gamma_0}\cap V_{p\mid (a_i)}\right)^{\circ}. 
    \end{equation*}
  \end{enumerate}
  This completes the proof. 
\end{proof}

\begin{remark}\label{re:dorroh-no-closure}
  \cite[Theorem]{dorr_loc} and \cite[Corollary 2.7]{tay_strict} follow the same strategy in showing that $\beta=\beta_b$ (the former for {\it commutative} $C^*$-algebras, the latter for arbitrary ones): starting with a closed absolutely convex $\beta_b$-neighborhood $W$ of $0\in M(A)$, one produces a $\beta_b$-closed absolutely convex subset $W'\subseteq W$ (playing the same role as $W_{\nu}$ in the proof of \Cref{th:unifconvforfree}) whose polar $(W')^{\circ}$ is $\beta$-equicontinuous.


  At this point, there is a small subtlety that we found somewhat puzzling on a first perusal of these sources: while the proof of \cite[Theorem]{dorr_loc} concludes from this that $W'$ itself must be a $\beta$-neighborhood of 0, the proof of \cite[Corollary 2.7]{tay_strict} further takes the $\beta$-closure of $W'$. This is not necessary, as $W'$ is already $\beta$-closed: since at that stage in the proof one already knows that $\beta$ and $\beta_b$ have the same continuous functionals, their respective closed {\it convex} sets also coincide \cite[Proposition 18.3]{trev_tvs}.
\end{remark}

As an immediate consequence of \Cref{th:unifconvforfree} we now have

\begin{corollary}\label{cor:sigmaok}
  For a bornological pro-$C^*$-algebra $A$ (so in particular for $\sigma$-$C^*$-algebras), the topologies $\beta$ and $\beta_b$ on its multiplier algebra $M(A)$ coincide. 
\end{corollary}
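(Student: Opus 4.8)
The plan is to read the corollary directly off \Cref{th:unifconvforfree} \Cref{item:ifftauequicont}, so that the only real work is to verify that its hypothesis holds and that one of its equivalent conditions is satisfied. First, since $A$ is bornological, \Cref{cor:bornok} gives $\tau=\tau_b$ on $A$; in particular $A_{\tau}^*=A_{\tau_b}^*$, so the standing hypothesis of \Cref{th:unifconvforfree} \Cref{item:ifftauequicont} (that $\tau$ and $\tau_b$ share the same continuous functionals) is met, and by \Cref{th:bbsamefunc} \Cref{item:t=tb} all four dual spaces in \Cref{eq:embdiamond} are then identified. It thus suffices to check condition \Cref{item:wtauequic}: that every $W_{\nu}^{\circ}$ is $\tau$-equicontinuous.

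For this I would first observe that each $W_{\nu}^{\circ}$ is the polar of the $\beta_b$-neighborhood $W_{\nu}$ of the origin (\Cref{eq:wphi}), hence $\beta_b$-equicontinuous and, a fortiori, bounded in the strong dual topology (a neighborhood absorbs every bounded set, so its polar is uniformly bounded on bounded sets). By \Cref{pr:multsamebdd} the topologies $\beta,\beta_b,\tau,\tau_b$ all share the same bounded sets, and \Cref{th:bbsamefunc} \Cref{item:embdiamond} identifies the relevant dual spaces homeomorphically for their strong dual topologies; under this identification $W_{\nu}^{\circ}$ becomes a strongly bounded subset of $A_{\tau}^*=A_{\tau_b}^*$.

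The last step is where bornological-ness does the actual work. A bornological space is quasi-barreled \cite[\S 28.1 (1)]{koeth_tvs-1}, and in a quasi-barreled space the strongly bounded subsets of the dual are precisely the equicontinuous ones (the defining property, \cite[\S 23.4]{koeth_tvs-1}). Hence the strongly bounded set $W_{\nu}^{\circ}$ is $\tau$-equicontinuous, which is exactly \Cref{item:wtauequic}, and \Cref{th:unifconvforfree} \Cref{item:ifftauequicont} then yields $\beta=\beta_b$. The $\sigma$-$C^*$ case is subsumed, those algebras being metrizable and hence bornological (\Cref{re:bornprocastok}).

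I expect the only genuinely delicate point to be the bookkeeping around the strong dual topologies: confirming that the equicontinuity of $W_{\nu}^{\circ}$ for $\beta_b$ transfers to strong boundedness on the $A$-side in a manner that feeds correctly into the quasi-barreled criterion (so that ``$\tau$-equicontinuous'' there matches the meaning of \Cref{item:wtauequic} as functionals on $A$). Everything else is a direct application of \Cref{cor:bornok}, \Cref{pr:multsamebdd} and \Cref{th:bbsamefunc}, which is why the statement is essentially immediate once \Cref{th:unifconvforfree} is in hand.
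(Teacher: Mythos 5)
Your proof is correct, and its overall skeleton coincides with the paper's: both verify the hypothesis of \Cref{th:unifconvforfree} \Cref{item:ifftauequicont} via \Cref{cor:bornok} (the paper routes this through \Cref{re:bornprocastok}) and then check condition \Cref{item:wtauequic}. Where you genuinely diverge is in how the $\tau$-equicontinuity of $W_{\nu}^{\circ}$ is obtained. The paper observes that a pro-$C^*$-algebra is complete, so bornological implies \emph{barreled} \cite[\S 28.1 (2)]{koeth_tvs-1}, and then applies the Banach--Steinhaus theorem \cite[Theorem 33.1]{trev_tvs} to the sets $W_{\nu}^{\circ}$, which are pointwise bounded essentially for free (since $W_{\nu}$ is absorbing). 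You instead use only bornological $\Rightarrow$ quasi-barreled \cite[\S 28.1 (1)]{koeth_tvs-1}, which needs no completeness, but the weaker conclusion (strongly bounded $\Rightarrow$ equicontinuous, rather than pointwise bounded $\Rightarrow$ equicontinuous) forces you to supply the correspondingly stronger premise: that $W_{\nu}^{\circ}$ is \emph{strongly} bounded, which you correctly extract from its being the polar of the $\beta_b$-neighborhood $W_{\nu}$ (hence $\beta_b$-equicontinuous), combined with \Cref{pr:multsamebdd} and the strong-topology identifications of \Cref{th:bbsamefunc}. The bookkeeping point you flag at the end is real but harmless: $\tau$-equicontinuity of the restrictions to $A$ passes back to functionals on $M(A)$ by strict continuity and the $\beta$-density of $A$, with the approximate unit satisfying $p(e_\lambda)\le 1$ for all $p\in S(A)$ --- exactly the reduction used in the proof of \Cref{th:equicontonma}. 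Net effect: the paper's route is slightly shorter, while yours is marginally more economical in hypotheses, dispensing with completeness --- though completeness is automatic for pro-$C^*$-algebras, so nothing is gained in generality here.
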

\begin{proof}
  We check that the requirements of \Cref{th:unifconvforfree} \Cref{item:ifftauequicont} are met. \Cref{re:bornprocastok} already confirms that $\tau=\tau_b$, so in particular these two have the same continuous functionals.
  
  Being bornological and complete, $A$ is also barreled \cite[\S 28.1 (2)]{koeth_tvs-1}. For barreled spaces pointwise-bounded sets of linear maps into locally convex topological vector spaces are equicontinuous \cite[Theorem 33.1]{trev_tvs}; since the sets $W_{\nu}^{\circ}$ of \Cref{th:unifconvforfree} \Cref{item:ifftauequicont} are plainly pointwise-bounded they must be $\tau$-equicontinuous, and we are done.
\end{proof}

\begin{remarks}\label{res:whenbarr}

  \begin{enumerate}[(1)]

  \item\label{item:fcx} A commutative unital pro-$C^*$-algebra is \cite[Proposition 2.6 and Theorem 2.7]{phil1} uniquely of the form
    \begin{equation}\label{eq:fc}
      \tensor[_{\cF}]{C(X)}{}:=\left\{
        \text{
          \begin{minipage}{10cm}
            continuous $\bC$-valued functions on $X$ with the topology of uniform convergence on members of $\cF$
          \end{minipage}
        }
      \right\},
    \end{equation}
    where
    \begin{itemize}
    \item $X$ is a {\it completely (or functionally) Hausdorff (or $T_2$)} \cite[Definition 2.2]{phil1} topological space in the sense that any two distinct points are distinguishable by some continuous real-valued function on $X$.

    \item And $\cF$ is a {\it distinguished family} \cite[Definition 2.5]{phil1} of compact subsets of $X$:
      \begin{itemize}
      \item containing the singletons;

      \item closed under taking compact subsets;

      \item closed under forming finite (or equivalently, binary) unions;
        
      \item and determining the topology (so that the space is {\it compactly generated}, or a {\it $\kappa$-space} \cite[Definition 43.8]{wil_top}):
        \begin{equation}\label{eq:topdet}
          A\subseteq X\text{ closed }
          \iff
          A\cap F\subseteq F\text{ closed },\ \forall F\in \cF.
        \end{equation}
      \end{itemize}
    \end{itemize}
    \cite[Remark 4.1]{inoue_loc} (paraphrased) observes that if a unital commutative pro-$C^*$-algebra $A\cong \tensor[_{\cF}]{C(X)}{}$ is barreled then $\cF$ consists of on {\it all} compact subsets of $X$. It is worth noting, though, that this is only a {\it necessary} condition for being barreled:
    \begin{equation*}
      X:=\text{first uncountable ordinal $\Omega$}
    \end{equation*}
    \cite[Remark following Proposition 12.2]{mich_loc} equipped with the {\it order topology} \cite[\S 14]{mnk} violates the converse, because it satisfies the hypotheses of \Cref{pr:whichbarr}. Indeed, every continuous real-valued function on $\Omega$ is bounded: this is noted obliquely in the already-cited \cite[Remark after Proposition 12.2]{mich_loc}; alternatively, it follows \cite[Theorem 17.13]{wil_top} from the fact that $\Omega$ is {\it countably compact} \cite[Definition 17.1]{wil_top}: every {\it countable} open cover has a finite subcover (see \cite[Example 17.2 (c)]{wil_top}, where our $\Omega$ appears as $\mathbf{\Omega_0}$).
    
  \item\label{item:cautioncomplhaus} Some caution is in order regarding the term `completely Hausdorff', whose meaning varies by source (hence the preference for `functionally $T_2$' in the sequel). 

    In \cite[Definition 2.2]{phil1} (and throughout the present paper, and \cite[Problem 14G]{wil_top}, which also offers the alternative adverb `functionally' in place of `completely') it refers to a strictly stronger property than, say, on \cite[\S I.2, p.13]{countertop}, where it refers to the {\it $T_{2\frac 12}$ separation axiom}: distinct points have neighborhoods with disjoint closures.

    The strict strength ordering is not difficult to tease out of the said sources:
    \begin{itemize}
    \item On the one hand, if a continuous function $X\xrightarrow{f}[0,1]$ takes the value $0$ at $x$ and $1$ at $y$, then of course the two points have neighborhoods $f^{-1}[0,1/3)$ and $f^{-1}(1/3,1]$ with disjoint closures, so that
      \begin{equation}\label{eq:implt212}
        \text{functionally $T_2$}
        \Longrightarrow
        T_{2 \frac 12}.
      \end{equation}

    \item On the other hand, \cite[discussion on p.13]{countertop} notes that {\it $T_3$ spaces} \cite[Definition 14.1]{wil_top} are $T_{2\frac 12}$ (as the indexing suggests), while there are compactly generated $T_3$ spaces whereon real-valued functions fail to distinguish points: this is argued in \cite[Example 2.14]{phil1}, featuring the space also appearing in \cite[Problem 18G]{wil_top}, \cite[\S VII.3, Ex.3]{dug_top-12p} and originally in \cite[discussion following Lemma]{hew_2pbs}. 
    \end{itemize}
    
    
    The class of spaces relevant to the theory of commutative pro-$C^*$-algebras (functionally $T_2$ $\kappa$-spaces) is thus strictly narrower than that of $T_{2\frac 12}$ $\kappa$-spaces. Incidentally, said class is also strictly {\it broader} \cite[Example 2.13]{phil1} than that of $T_{3\frac 12}$ (i.e. {\it Tychonoff} \cite[Definition 14.8]{wil_top}) $\kappa$-spaces. The non-regular space in \cite[Problem 3J]{gj_rings} will do just as well: the usual topology on $\bR$, minimally extended so as to render $\{1/n,\ n\in\bZ_{>0}\}$ closed, is certainly functionally $T_2$ (because $\bR$ itself is), and compactly generated because \cite[Theorem 43.9 (b)]{wil_top} it is clearly {\it first-countable} \cite[Definition 10.3]{wil_top} (every point has a countable neighborhood base).

    Finally, \cite[Examples 2.13 and 2.14]{phil1} also jointly show that the class of interest is not comparable to that of $T_{3}$ $\kappa$-spaces. The functor diagram
    \begin{equation}\label{eq:howincluded}
      \begin{tikzpicture}[auto,baseline=(current  bounding  box.center)]
        \path[anchor=base] 
        (0,0) node (l) {$T_{3\frac 12,\kappa}$}
        +(4,1) node[draw,align=center,text width=2cm,inner sep=2pt,outer sep=2pt,rounded corners=0.2cm] (u) {functionally $T_2$ $\kappa$-spaces}
        +(4,-1) node (d) {$T_{3,\kappa}$}
        +(8,0) node (r) {$T_{2\frac 12,\kappa}$}
        ;
        \draw[right hook->] (l) to[bend left=6] node[pos=.5,auto] {$\scriptstyle \text{\cite[Example 2.13]{phil1}}$} (u);
        \draw[right hook->] (u) to[bend left=6] node[pos=.5,auto] {$\scriptstyle \text{\cite[Example 2.14]{phil1}}$} (r);
        \draw[right hook->] (l) to[bend right=6] node[pos=.5,auto,swap] {$\scriptstyle \text{\cite[Example 2.14]{phil1}}$} (d);
        \draw[right hook->] (d) to[bend right=6] node[pos=.5,auto,swap] {$\scriptstyle \text{\cite[\S II.78]{countertop}}$} (r);
        \draw[dashed,-] (u) to[bend right=0] node[pos=.5,auto,swap] {$\scriptstyle \text{\cite[2.13 \& 2.14]{phil1}}$} (d);
      \end{tikzpicture}
    \end{equation}
    summarizes the relationships between the various categories: the `$\kappa$' subscripts mean compact generation, and the dashed path indicates incomparability (i.e. neither category contains the other), and all inclusion functors are strict, as justified by the examples labeling the arrows.

    For the lower right-hand arrow compact generation, although not discussed in \cite[\S II.78]{countertop}, follows \cite[Theorem 43.9 (b)]{wil_top} again from first-countability.
  \end{enumerate}
\end{remarks}

\section{Complements on commutative pro-$C^*$-algebras}\label{se:compl-comm-pro}

Recall that
\begin{itemize}
\item A topological space is {\it pseudocompact} \cite[\S 1.4]{gj_rings} if every real-valued function on it is bounded.
\item A subspace $A\subseteq X$ is {\it relatively compact} \cite[\S 0.8]{ks_diff} if its closure is compact. 
\item And a subspace $A\subseteq X$ is {\it relatively pseudocompact} \cite[preceding Proposition 1.1]{schom_rel} if every continuous real-valued function on $X$ is bounded on $A$. 
\end{itemize}

\begin{proposition}\label{pr:whichbarr}
  A unital commutative pro-$C^*$-algebra is barreled if and only if it is of the form
  \begin{equation*}
    C(X)\text{ with the compact-open topology}
  \end{equation*}
  for some functionally $T_2$ space $X$ such that
  \begin{equation}\label{eq:droppseudo}
    \text{all relatively pseudocompact $A\subseteq X$ are relatively compact.}
  \end{equation}
\end{proposition}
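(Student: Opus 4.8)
The plan is to route the whole statement through the duality between $C_k(X):=C(X)$-with-the-compact-open-topology and its space of functionals. By the Riesz representation theorem a continuous functional on $C_k(X)$ is bounded by a multiple of some seminorm $p_K=\sup_K|\cdot|$, hence is represented by a Radon measure carried by a compact set $K\subseteq X$; a family $S$ of such functionals is equicontinuous exactly when it is carried by a single compact set with uniformly bounded total variation (this is the content of \cite[Proposition 32.7]{trev_tvs}, the polar of a basic neighborhood $\{p_K\le 1\}$ being the unit ball of measures on $K$), and it is pointwise bounded exactly when $\sup_{\mu\in S}|\mu(f)|<\infty$ for every $f$. With this dictionary in hand the proposition becomes a translation of the Banach--Steinhaus characterization of barreledness (\cite[Theorem 33.1]{trev_tvs}) into statements about subsets of $X$.

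For the forward implication I would first apply \Cref{res:whenbarr} \Cref{item:fcx}: the algebra is some $\tensor[_{\cF}]{C(X)}{}$ with $X$ functionally $T_2$, and by \cite[Remark 4.1]{inoue_loc} barreledness forces $\cF$ to be full, so the topology is the compact-open one. To obtain \Cref{eq:droppseudo} I would feed a relatively pseudocompact $A\subseteq X$ into the duality through its point evaluations $\{\delta_x:x\in A\}\subseteq C_k(X)^*$: relative pseudocompactness says precisely that this family is pointwise bounded, so \cite[Theorem 33.1]{trev_tvs} makes it equicontinuous; but equicontinuity of a family of point masses forces their base points into a single compact $K$, i.e.\ $A\subseteq K$, so $\overline{A}$ is compact. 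Thus relatively pseudocompact subsets are relatively compact.

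The converse is the substantive half, and is a Nachbin--Shirota-type assertion. The subtlety is that \Cref{eq:droppseudo} only records, via the same dictionary, that pointwise-bounded families of \emph{point masses} are equicontinuous, whereas barreledness demands this for arbitrary pointwise-bounded families of measures. I would argue by contraposition: assuming $C_k(X)$ is not barreled, fix a barrel $B$ that is not a neighborhood of $0$, equivalently a lower semicontinuous gauge $p_B$ (finite everywhere since $B$ absorbs) that is dominated by no single $p_K$. Exhausting $X$ by feeding compact sets into this failure of domination, one constructs inductively an increasing sequence of compacts $K_n$ together with functions that are small on $K_n$ but large in $p_B$-gauge, and reads off base points $x_n$ escaping every compact set; the closedness and absolute convexity of $B$ (so that $B=B^{\circ\circ}$) are what let one certify that the resulting set $\{x_n\}$ is relatively pseudocompact, while the escape property makes it non-relatively-compact, contradicting \Cref{eq:droppseudo}. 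I expect this extraction of a relatively pseudocompact, non-relatively-compact witness from a non-neighborhood barrel to be the genuine obstacle; everything else is bookkeeping with polars.

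Two points I would want to pin down. First, to speak of $C_k(X)$ as a pro-$C^*$-algebra at all one needs the full family of compact subsets to be distinguished, i.e.\ $X$ to be compactly generated; in the forward direction this is handed to us by \Cref{res:whenbarr} \Cref{item:fcx}, and in the converse it is built into the hypothesis that the object is a pro-$C^*$-algebra of the stated form. Second, since $X$ is only functionally $T_2$ rather than Tychonoff, the Riesz description of the dual and the classical Nachbin--Shirota theorem are cleanest to apply after passing to the completely regular reflection $X\to\tau X$, under which $C(X)$, its compact-open topology, and the condition \Cref{eq:droppseudo} are all preserved (a relatively pseudocompact set and its image share every bounded continuous function, and the $\mu$-space property transfers because a preimage of a relatively pseudocompact subset of $\tau X$ is again relatively pseudocompact, hence relatively compact, in $X$). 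Verifying that this reflection leaves the compact-open \emph{topology}---and not merely the algebra---unchanged is the one place where the functionally-$T_2$-versus-Tychonoff distinction must be handled with care.
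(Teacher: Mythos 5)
Your forward direction is sound and is essentially the paper's: \cite[Remark 4.1]{inoue_loc} forces the family to be full, and feeding the point evaluations of a relatively pseudocompact set into Banach--Steinhaus is exactly how the paper concludes (it phrases this via seminorms rather than polars, but the mechanism is the same). The genuine gap is the converse, which you yourself identify as ``the substantive half'' and then do not prove: the extraction of a relatively pseudocompact, non-relatively-compact witness from a non-neighborhood barrel is precisely the hard content of the Nachbin--Shirota theorem, and your sketch does not go through as stated. Concretely: (i) a function that is small on $K_n$ but large in $p_B$-gauge has no ``base point'' to read off --- to produce points you must represent the functionals of $B^{\circ}$ by measures and work with their supports, and then both the relative pseudocompactness of the union of supports and the uniform boundedness of the total variations require separate inductive arguments using regularity of measures, not ``bookkeeping with polars''; (ii) your induction runs over an increasing sequence $K_1\subseteq K_2\subseteq\cdots$, but $X$ need not be hemicompact, so no sequence of compacts is cofinal and a sequence with $x_n\notin K_n$ need not have non-compact closure; the failure of domination by a single $p_K$ must be played off against \emph{all} compacta at once. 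Finally, your reduction to the Tychonoff reflection $\tau X$ quietly invokes the statement that $X\to\tau X$ preserves the compact-open topology, which is exactly \Cref{th:t312barrel} \Cref{item:samecpct} --- a fact the paper \emph{deduces from} this very proposition, so as organized your argument risks circularity. (It can be proved directly from \Cref{eq:droppseudo}: a $\tau X$-compact set is relatively pseudocompact in $X$, hence has compact $X$-closure, on which the two topologies agree; but you flag the point without carrying it out.)

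The paper's own proof avoids both the measure representation and the Tychonoff detour, which renders your two ``points to pin down'' moot. It characterizes barreledness by the automatic continuity of lower semicontinuous seminorms \cite[\S 21.2 (2) and (3)]{koeth_tvs-1}, proves in \Cref{le:pbddfam} (via Hahn--Banach applied to the unit semiball) that every lower semicontinuous seminorm is the supremum of the pointwise-bounded family of continuous seminorms it dominates, and then reduces in $C(X)$ to suprema of point evaluations $\sup_{x\in A}|f(x)|$ over $A\subseteq X$: pointwise boundedness of that family says exactly that $A$ is relatively pseudocompact, and continuity of the supremum says exactly that $\overline{A}$ is compact, so \Cref{eq:droppseudo} settles both implications simultaneously. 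If you wish to salvage your route through duals of $C_k(X)$, you would in effect be reproving Nachbin--Shirota, and the two inductive constructions noted above are the steps you must supply.
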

\begin{proof}
  That barreled commutative unital pro-$C^*$-algebras are of the form $C(X)$ ($X$ functionally $T_2$) with uniform convergence on compact subsets of $X$ is, as already mentioned, observed in \cite[Remark 4.1]{inoue_loc} (see also \Cref{th:t312barrel} \Cref{item:t312barrel}, where the brief argument is restated). It is thus sufficient to consider pro-$C^*$-algebras $C(X)$, and argue that in that case being barreled is equivalent to \Cref{eq:droppseudo}. To see this, observe first that a locally convex TVS $V$ is barreled precisely when every {\it lower semicontinuous} \cite[\S 6.2]{koeth_tvs-1} seminorm
  \begin{equation*}
    V\ni v\xmapsto{\quad}\sup_{p\in \cP}p(v)\in \bR_{\ge 0}
  \end{equation*}
  is continuous for every pointwise-bounded family $\cP$ of continuous seminorms $p$: this follows from instance from
  \begin{itemize}
  \item the fact that all lower semicontinuous seminorms are of this form (\Cref{le:pbddfam});

  \item and the characterization \cite[\S 21.2 (2) and (3)]{koeth_tvs-1} of barreled TVSs as those whose lower semicontinuous seminorms are all automatically continuous. 
  \end{itemize}
  In the present setup it is enough to consider families of the form
  \begin{equation}\label{eq:pxfam}
    \{p_x:=|\text{evaluation at $x$}|\ |\ x\in A\}
  \end{equation}
  for subsets $A\subseteq X$, in which case pointwise boundedness simply means that $A$ is relatively pseudocompact. On the other hand, the supremum over \Cref{eq:pxfam} will be continuous only when dominated by the supremum over some compact subset of $X$, i.e. precisely when the closure $\overline{A}\subseteq X$ is compact.
\end{proof}

The proof of \Cref{pr:whichbarr} appeals to the following result; it is presumably well known, and \cite[Lemma VII.1.16, Proposition VII.4.2]{tak2} will serve the reader as close enough analogues (for {\it ordered} locally convex TVSs, with the proof easily adaptable). 

\begin{lemma}\label{le:pbddfam}
  Let $V$ be a real or complex locally convex TVS and $S(V)$ its associated family of continuous seminorms.
  \begin{enumerate}[(1)]

  \item\label{item:qissup} Every lower semicontinuous seminorm $q$ on $V$ is expressible as
    \begin{equation*}
      q(v) = \sup_{S(V)\ni p\le q}p(v).
    \end{equation*}

  \item\label{item:qissupbis} In particular, every lower semicontinuous seminorm is of the form
    \begin{equation*}
      q(v) = \sup_{p\in \cP}p(v)
    \end{equation*}
    for some pointwise-bounded subfamily $\cP\subseteq S(V)$.

  \end{enumerate}  
\end{lemma}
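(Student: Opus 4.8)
The plan is to prove \Cref{item:qissup} first --- the identity $q(v)=\sup_{S(V)\ni p\le q}p(v)$ --- and then read off \Cref{item:qissupbis} as an immediate consequence. In \Cref{item:qissup} one inequality is free: every continuous seminorm $p\le q$ satisfies $p(v)\le q(v)$ pointwise, so the right-hand supremum is at most $q(v)$. All the content lies in the reverse inequality, and the natural mechanism is to manufacture enough continuous seminorms $\le q$ out of continuous linear functionals dominated by $q$, via Hahn--Banach separation.

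Concretely, I would fix $v_0\in V$, assume $q(v_0)>0$ (otherwise there is nothing to prove), and choose any $t$ with $0<t<q(v_0)$. The sublevel set $C:=\{v\in V\mid q(v)\le 1\}$ is absolutely convex because $q$ is a seminorm, and \emph{closed} precisely because $q$ is lower semicontinuous; this is the one and only place the hypothesis enters, and it is the crux of the argument. Since $q(v_0/t)=q(v_0)/t>1$, the point $v_0/t$ lies outside the closed absolutely convex set $C$, so the Hahn--Banach separation theorem supplies, after normalization, a continuous linear functional $f$ with $\sup_{v\in C}|f(v)|\le 1$ and $|f(v_0/t)|>1$. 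A short homogeneity check then promotes $\sup_C|f|\le 1$ to the global domination $|f|\le q$: any $v$ with $q(v)>0$ can be rescaled into $C$ via $v/q(v)$, giving $|f(v)|\le q(v)$, while a vector with $q(v)=0$ has all its scalar multiples in $C$, forcing $f(v)=0=q(v)$. Thus $p:=|f|$ is a continuous seminorm with $p\le q$ and $p(v_0)>t$, whence $\sup_{S(V)\ni p\le q}p(v_0)\ge t$. Letting $t\uparrow q(v_0)$ yields the reverse inequality and establishes \Cref{item:qissup}.

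Finally, \Cref{item:qissupbis} is immediate: take $\cP:=\{p\in S(V)\mid p\le q\}$. By \Cref{item:qissup} we have $q=\sup_{p\in\cP}p$, and this family is pointwise bounded since for each fixed $v$ the supremum $\sup_{p\in\cP}p(v)$ equals the finite value $q(v)$. The main obstacle throughout is not the algebra but ensuring that the dominated functionals produced are genuinely \emph{continuous}: separation returns a continuous functional only because $C$ is closed, so the lower semicontinuity of $q$ is exactly what converts an abstract domination $|f|\le q$ into membership in $S(V)$.
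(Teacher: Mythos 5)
Your proof is correct and is essentially the paper's own argument: both convert the lower semicontinuity of $q$ into closedness of the unit semiball $B_q=\{v\in V\mid q(v)\le 1\}$ (the crux, exactly as you identify) and then apply Hahn--Banach separation, the paper packaging the separation as the statement that $B_q$ is the intersection of the barrel neighborhoods containing it and recovering the dominated continuous seminorms as their Minkowski gauges, while you separate pointwise and take $p=|f|$ directly. The only difference is cosmetic, and in fact your version yields the marginally sharper observation that the supremum may be taken over the subfamily of seminorms of the special form $|f|$ with $f\in V^*$ and $|f|\le q$.
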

\begin{proof}
  \Cref{item:qissupbis} is clearly a consequence of \Cref{item:qissup}, so it is enough to address the latter. The {\it (closed) unit semiball} \cite[Definition 7.6]{trev_tvs}
  \begin{equation*}
    B_{q}:=\{v\in V\ |\ q(v)\le 1\}
  \end{equation*}
  is a barrel, as can easily be checked directly (\cite[Proposition 7.4]{trev_tvs} states this for {\it continuous} seminorms, but lower semicontinuity is all that is needed to ensure the closure of $B_q$).

  It follows from (one version of) the Hahn-Banach theorem (e.g. as a consequence of \cite[Corollary 1 to Proposition 18.2]{trev_tvs}) that $B_q$ is the intersection of the barrels $B$ containing it that are also origin neighborhoods. Every such $B$, though, is the closed unit semiball of a unique {\it continuous} \cite[Proposition 7.5]{trev_tvs} seminorm
  \begin{equation*}
    p_B(v):=\inf\left\{\lambda\ge 0\ |\ v\in\lambda B\right\}.
  \end{equation*}
  The resulting continuous seminorms $p_B$ are precisely those dominated by $q$, finishing the proof.
\end{proof}

\Cref{pr:whichbarr}, in particular, provides a rich supply of examples of barreled pro-$C^*$-algebras which are {\it not} $\sigma$-$C^*$. First, an immediate consequence of \Cref{pr:whichbarr}:

\begin{corollary}\label{cor:ismet}
  Every pro-$C^*$-algebra $C(X)$ with the compact-open topology and metrizable $X$ is barreled. 
\end{corollary}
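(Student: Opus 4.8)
The plan is to read the corollary off \Cref{pr:whichbarr}. A metrizable $X$ is functionally $T_2$ (indeed Tychonoff: for distinct $x,y$ the function $z\mapsto d(z,x)$ separates them), and the hypothesis already presents the algebra as $C(X)$ with the compact-open topology, so by the sufficiency direction of \Cref{pr:whichbarr} the only thing left to check is condition \Cref{eq:droppseudo}: that every relatively pseudocompact $A\subseteq X$ has compact closure. (That $C(X)$ with the compact-open topology is genuinely a pro-$C^*$-algebra is automatic here, since a metrizable space is first-countable, hence compactly generated.)

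I would argue the contrapositive, manufacturing an unbounded continuous real-valued function on $X$ whenever $\overline{A}$ fails to be compact. Fix a compatible metric $d$ and suppose $\overline{A}$ is not compact. As a metric space $\overline{A}$ is then not sequentially compact, so there is a sequence $(x_n)$ in $\overline{A}$ with no subsequence converging in $\overline{A}$. Choosing $a_n\in A$ with $d(a_n,x_n)<1/n$, a one-line triangle-inequality estimate shows that $(a_n)$ also admits no convergent subsequence in $X$: any limit of a subsequence of $(a_n)$ would be a limit of the corresponding subsequence of $(x_n)$ and would lie in the closed set $\overline{A}$, which is impossible.

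Discarding repetitions (a value occurring infinitely often would yield a constant, hence convergent, subsequence) I may assume the $a_n$ are distinct, so the set $D:=\{a_n\mid n\}$ has no accumulation point in $X$ and is therefore closed and discrete. Every function on the discrete space $D$ is continuous, so $a_n\mapsto n$ extends, by the Tietze extension theorem (metric spaces being normal \cite{wil_top}), to a continuous $f\colon X\to\bR$; this $f$ is unbounded on $D\subseteq A$, contradicting the relative pseudocompactness of $A$. Hence $\overline{A}$ is compact, \Cref{eq:droppseudo} holds, and \Cref{pr:whichbarr} yields that $C(X)$ is barreled. The one genuinely nontrivial point — and the step I expect to require care — is precisely the passage from ``$\overline{A}$ noncompact'' to a closed discrete subset of $A$ itself rather than merely of $\overline{A}$: since $A$ need not be closed, the approximating sequence $(a_n)\subseteq A$ is what makes the final Tietze extension land on a function that is unbounded on $A$.
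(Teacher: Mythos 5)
Your proof is correct and takes essentially the same route as the paper's: reduce via \Cref{pr:whichbarr} to verifying \Cref{eq:droppseudo}, then refute relative pseudocompactness of a non-relatively-compact $A$ by extracting a sequence with no convergent subsequence and Tietze-extending $a_n\mapsto n$ from the resulting closed discrete set. The only difference is cosmetic: the paper invokes \cite[Exercise 17G.3]{wil_top} to obtain such a sequence directly inside $A$, whereas you take it in $\overline{A}$ and approximate by points of $A$, so your extra care at that step amounts to reproving the cited exercise's content.
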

\begin{proof}
  By \Cref{pr:whichbarr}, since for metrizable $X$ relatively pseudocompact subsets are relatively compact. Indeed, if $A\subseteq X$ is {\it not} compact then it contains \cite[Exercise 17G.3]{wil_top} some sequence $(x_n)$ with no convergent subsequence. The function
  \begin{equation*}
    x_n\xmapsto{\quad}n\in \bR
  \end{equation*}
  then extends continuously from the closed subset $\{x_n\}\subseteq X$ to all of $X$ \cite[Example 15.3 (c) and Theorem 15.8]{wil_top}, meaning that $\{x_n\}$ and hence $A$ is not relatively pseudocompact in $X$. 
\end{proof}

Consequently:

\begin{example}\label{ex:cq}
  Having topologized $\bQ\subset \bR$ as a subspace, $C(\bQ)$ is barreled by \Cref{cor:ismet} but not a $\sigma$-$C^*$-algebra \cite[Example 5.8]{phil1}.
\end{example}

In fact, the discussion producing \cite[Example 5.8]{phil1} extends to give a complete characterization of those metrizable spaces $X$ for which $C(X)$ is a $\sigma$-$C^*$-algebra. More generally:

\begin{proposition}\label{pr:1stcount}
  Let
  \begin{equation*}
    A\cong \left(C(X),\ \text{compact-open topology}\right)
  \end{equation*}
  be a commutative unital $\sigma$-$C^*$-algebra.

  \begin{enumerate}[(1)]
  \item\label{item:1stloc} If $x\in X$ has a countable system of neighborhoods then it has a compact neighborhood.

  \item\label{item:1stglob} In particular, if $X$ is first-countable then it is locally compact. 
  \end{enumerate}
\end{proposition}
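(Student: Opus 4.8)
The plan is to convert the hypothesis ``$A\cong(C(X),\text{compact-open})$ is $\sigma$-$C^*$'' into an exhaustion of $X$ by a countable increasing family of compact sets, and then to exploit first-countability at $x$ to build a single compact set whose seminorm cannot be dominated unless $x$ already lies in the interior of one of the exhausting compacts. First I would record the structural input: by \Cref{res:whenbarr} \Cref{item:fcx} the space $X$ is functionally $T_2$ (hence Hausdorff, so compact subsets are closed), and the topology is that of uniform convergence on the directed family $\{p_K\}_{K\subseteq X\text{ compact}}$, where $p_K(f):=\sup_{x\in K}|f(x)|$. Being $\sigma$-$C^*$, the algebra is metrizable (\Cref{re:bornprocastok}), so this directed family admits a countable cofinal subfamily; passing to finite unions (for which $p_{K_1\cup K_2}=\max(p_{K_1},p_{K_2})$) yields an increasing sequence of compact sets $L_1\subseteq L_2\subseteq\cdots$ with $\{p_{L_n}\}_n$ a fundamental increasing system of seminorms. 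The operative consequence is that for every compact $K\subseteq X$ there are $n$ and $C>0$ with $p_K\le C\,p_{L_n}$.

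Next I would isolate the one separation tool needed: in a functionally $T_2$ space, a point $x$ disjoint from a compact set $L$ can be separated by a continuous $f:X\to[0,1]$ with $f(x)=1$ and $f|_L\equiv 0$. This is the routine compactness argument (cover $L$ by finitely many opens on which functions vanishing at $x$ stay bounded below, take the pointwise maximum, and compose with a real cutoff), which I would dispatch in a sentence.

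With these in hand I prove \Cref{item:1stloc}. Suppose for contradiction that $x\notin\operatorname{int}(L_n)$ for every $n$ (otherwise $L_n$ is already a compact neighborhood of $x$ and we are done). Fix a decreasing countable neighborhood base $U_1\supseteq U_2\supseteq\cdots$ at $x$. Since $x\notin\operatorname{int}(L_k)$, no $U_k$ lies inside $L_k$, so I may pick $x_k\in U_k\setminus L_k$. As the $U_k$ form a base, $x_k\to x$, whence $K:=\{x\}\cup\{x_k\mid k\ge 1\}$ is compact. The domination consequence above gives $n$ and $C$ with $p_K\le C\,p_{L_n}$; but the separating function $f$ for the pair $x_n\notin L_n$ satisfies $p_K(f)\ge|f(x_n)|=1$ while $p_{L_n}(f)=0$, forcing $1\le C\cdot 0=0$ --- a contradiction. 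Hence $x\in\operatorname{int}(L_n)$ for some $n$, and $L_n$ is the desired compact neighborhood. Part \Cref{item:1stglob} then follows at once by applying \Cref{item:1stloc} at each point of $X$.

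The main obstacle is the first paragraph: pinning down that ``$\sigma$-$C^*$ together with compact-open'' produces an increasing fundamental sequence of seminorms \emph{indexed by compact sets}, so that the domination of an arbitrary $p_K$ by a single $p_{L_n}$ is genuinely available. Once this cofinal exhaustion is secured, the remainder is a clean escape-sequence argument, with the functional-$T_2$ separation as the only other ingredient.
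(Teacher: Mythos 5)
Your proof is correct and follows essentially the same route as the paper's: both arguments extract an increasing sequence of compact sets $L_1\subseteq L_2\subseteq\cdots$ defining the $\sigma$-$C^*$ topology, use the countable neighborhood base at $x$ (together with the assumption $x\notin\operatorname{int}(L_k)$ for all $k$) to build an escape sequence $x_k\to x$ with $x_k\notin L_k$, and derive a contradiction with uniform convergence on the compact set $\{x\}\cup\{x_k\mid k\ge 1\}$. The only difference is one of packaging: where the paper delegates the analytic step to \cite[Lemma 2.8]{phil1} (a net of continuous functions converging to $0$ on each $L_n$ but not on that compact set), you make the argument self-contained by spelling out the cofinal seminorm domination $p_K\le C\,p_{L_n}$ coming from metrizability and by constructing a single separating function from the functionally $T_2$ property, which is exactly what that cited lemma provides.
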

\begin{proof}
  Part \Cref{item:1stglob} follows from \Cref{item:1stloc} (whose global counterpart it is), so we focus on the first claim. It, though, is essentially what the argument employed in \cite[Example 5.8]{phil1} proves: let
  \begin{equation*}
    x_n\xrightarrow[n]{\quad}x,\quad x_n\not\in K_n,\ \forall n\in \bZ_{>0}
  \end{equation*}
  be a convergent sequence, with $(K_n)$ denoting an increasing chain of compact subsets of $X$ defining the $\sigma$-$C^*$ topology. There is then \cite[Lemma 2.8]{phil1} a net of continuous functions converging to 0 on each $K_n$ but not on the compact set
  \begin{equation*}
    \left\{x_n,\ n\in \bZ_{>0}\right\}\cup \{x\},
  \end{equation*}
  contradicting the fact that the topology on $C(X)$ is that of uniform convergence on {\it all} compact subsets. 
\end{proof}

An immediate consequence (recalling \cite[Problem 17I]{wil_top} that a space is {\it $\sigma$-compact} if it is a countable union of compact subspaces):

\begin{corollary}\label{cor:whensigma1st}
  For a first-countable (in particular metrizable) functionally $T_2$ $X$ the algebra $C(X)$ with its compact-open topology is $\sigma$-$C^*$ if and only if $X$ is locally compact and $\sigma$-compact.  \qedhere
\end{corollary}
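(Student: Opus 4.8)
The plan is to identify ``$\sigma$-$C^*$'' with \emph{hemicompactness} of $X$ and then read off the two point-set conditions. Recall that a commutative unital pro-$C^*$-algebra is $\sigma$-$C^*$ exactly when its topology is generated by countably many $C^*$-seminorms, which by \Cref{re:bornprocastok} is the same as being metrizable. For $C(X)$ with the compact-open topology the relevant $C^*$-seminorms are the suprema $p_K(f):=\sup_{x\in K}|f(x)|$ over compact $K\subseteq X$, so the whole question becomes: when does a countable subfamily $\{p_{K_n}\}$ already generate the compact-open topology?

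The technical heart is the claim that, for functionally $T_2$ $X$, a domination $p_K\preceq c\,p_{K_n}$ forces $K\subseteq K_n$. I would prove this by contradiction: given $x\in K\setminus K_n$, functional Hausdorffness supplies, for each $y\in K_n$, a continuous real function separating $x$ from $y$; squaring and summing finitely many of these (using compactness of $K_n$) yields a continuous $h\ge 0$ with $h(x)=0$ and $h\ge\delta>0$ on $K_n$, and then $f:=\max(\delta-h,0)$ vanishes on $K_n$ while $f(x)=\delta>0$, contradicting the domination. Consequently a countable generating family of seminorms is precisely a countable family of compact sets cofinal under inclusion, i.e. $X$ is hemicompact.

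With this in hand both implications are short. If $C(X)$ is $\sigma$-$C^*$ then $X$ is hemicompact; the witnessing sets $K_n$ cover $X$ (each singleton is compact, hence lies in some $K_n$), so $X$ is $\sigma$-compact, while first-countability together with the $\sigma$-$C^*$ hypothesis yields local compactness by \Cref{pr:1stcount} \Cref{item:1stglob}. Conversely, a locally compact $\sigma$-compact $X$ is hemicompact: exhaust $X=\bigcup_n K_n$ by compacta arranged so that $K_n\subseteq \mathrm{int}\,K_{n+1}$, and observe that any compact set is covered by finitely many of the interiors, hence contained in some $K_N$. Then the countably many $p_{K_n}$ generate the topology, so $C(X)$ is metrizable; and since local compactness makes $X$ compactly generated, $C(X)$ with the compact-open topology is a genuine (in particular complete) commutative unital pro-$C^*$-algebra, so this metrizability upgrades to $\sigma$-$C^*$.

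The step I expect to require the most care is the containment lemma of the second paragraph: recovering $K\subseteq K_n$ from mere seminorm domination using only functional Hausdorffness (rather than full regularity) of $X$. The other point to keep honest is the converse's appeal to completeness — one must invoke compact generation (guaranteed here by local compactness) to know that $C(X)$ really is a pro-$C^*$-algebra and not merely a metrizable locally convex algebra.
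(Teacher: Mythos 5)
Your proof is correct and follows essentially the route the paper intends: the corollary is stated there without proof as an immediate consequence of \Cref{pr:1stcount} (which supplies local compactness in the forward direction), combined with the standard identification, for the compact-open topology, of countable generation by $C^*$-seminorms with hemicompactness of $X$, the converse being the usual exhaustion $K_n\subseteq \mathrm{int}\,K_{n+1}$ for locally compact $\sigma$-compact spaces. The one detail the paper leaves implicit---that a domination $p_K\le c\,p_{K_n}$ forces $K\subseteq K_n$, whence the $K_n$ are cofinal among compacta and in particular cover $X$---you verify correctly using only functional Hausdorffness (summing squares of point-separating functions over a finite subcover of $K_n$), which is exactly the right substitute for Urysohn-type separation given that $X$ need not be Tychonoff.
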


\begin{remarks}\label{res:howqworks}
  \begin{enumerate}[(1)]
  \item \Cref{cor:whensigma1st} makes it clear what the core issue driving \Cref{ex:cq} is: $\bQ$ is not locally compact. 

  \item There is a first part of \cite[Example 5.8]{phil1} arguing via \cite[Theorems 8.2 and 10.6]{gj_rings} that if $C(\bQ)$ is realized as a $\sigma$-$C^*$-algebra $C(X)$ then $X$ is homeomorphic to $\bQ$; this seems to me unnecessary, given that we already have a category equivalence \cite[Theorem 2.7 and Proposition 2.6]{phil1} between spaces-with-distinguished-sets and pro-$C^*$-algebras. 
  \end{enumerate}  
\end{remarks}

\cite[Remark 4.1]{inoue_loc} (as elaborated in \Cref{pr:whichbarr}) gives sufficient conditions for the distinguished family $\cF$ defining the pro-$C^*$ topology \Cref{eq:fc} on $\tensor[_{\cF}]{C(X)}{}$ to be full. This naturally prompts a more careful examination of how $\tensor[_{\cF}]{C(X)}{}$ {\it qua} pro-$C^*$-algebra reflects the point-set topology encoded by $\cF$.

As a concrete sample issue, \Cref{ex:taunetaub} notes in passing that under a very specific set of circumstances the bounded subsets of $\tensor[_{\cF}]{C(X)}{}$ are precisely the ``expected'' ones. The following simple remark records the fully general character of that phenomenon.

\begin{lemma}\label{le:allfsamebdd}
  A subset $B\subseteq \tensor[_{\cF}]{C(X)}{}$ of the pro-$C^*$-algebra \Cref{eq:fc} is abstractly bounded precisely when it is uniformly bounded on every compact subset of $X$:
  \begin{equation}\label{eq:bddeveryk}
    \sup_{f\in B,\ x\in K}|f(x)|<\infty,\quad\forall K\subseteq X\text{ compact}.
  \end{equation}
  In other words, the family of bounded subsets of $\tensor[_{\cF}]{C(X)}{}$ does not depend on $\cF$. 
\end{lemma}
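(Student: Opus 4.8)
The plan is to reduce the whole statement to a boundedness property of the single function $g:=\sup_{f\in B}|f|$ and then feed that into the compact generation \eqref{eq:topdet} of $X$. A subset $B$ of the space \eqref{eq:fc} is bounded exactly when each of the defining seminorms $p_F(f):=\sup_{x\in F}|f(x)|$, $F\in\cF$, is bounded on it, i.e.\ precisely when $g$ is bounded on every $F\in\cF$; and because the singletons lie in $\cF$ (by definition of a distinguished family), this same hypothesis already forces $g$ to be finite-valued. The implication ``(uniformly bounded on every compact set) $\Rightarrow$ (bounded in \eqref{eq:fc})'' is immediate, since every member of $\cF$ is compact. So the entire content is the converse: \emph{$g$ bounded on each $F\in\cF$ forces $g$ bounded on each compact $K\subseteq X$.}

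I would argue this by contradiction. If $g$ were unbounded on some compact $K$, choose $x_k\in K$ with $g(x_k)\ge k$. Finiteness of $g$ prevents any single point from occurring infinitely often among the $x_k$ (else $g$ would be infinite there), so after relabelling I may take the $x_k$ pairwise distinct with $g(x_k)\to\infty$; write $A:=\{x_k\mid k\ge 1\}$. The crucial elementary observation is that \emph{every infinite subset $S\subseteq A$ satisfies $\sup_{x\in S}g(x)=\infty$}, simply because such an $S$ must contain points $x_k$ of arbitrarily large index. Next I extract an infinite, non-closed subset $A'\subseteq A$: if $A$ is itself not closed, take $A'=A$; if $A$ is closed then, being an infinite subset of the compact set $K$, it has an accumulation point, which — as $A$ is closed and $X$ is $T_1$ (being functionally $T_2$) — lies in $A$, and then $A':=A\setminus\{\text{that point}\}$ is infinite and not closed.

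Finally I apply compact generation to $A'$. By \eqref{eq:topdet} the non-closed set $A'$ must already fail to be closed in trace: there is some $F\in\cF$ with $A'\cap F$ not closed in $F$. Since $F$ is $T_1$, a non-closed subset has a limit point and is therefore infinite, so $A'\cap F$ is an infinite subset of $A$; by the observation above $g$ is unbounded on $A'\cap F\subseteq F$, contradicting the boundedness of $B$ on $F\in\cF$. This contradiction yields \eqref{eq:bddeveryk} and proves the lemma.

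The step I expect to be the genuine pitfall is this last one, where it is tempting to try to pin down \emph{where} $A'\cap F$ accumulates and to match that location to the blow-up of $g$; this is a red herring, since the limit point produced by compact generation need not be a point near which $g$ is large. The correct move is to use \eqref{eq:topdet} only to extract the \emph{infinitude} of the trace $A'\cap F$, infinitude alone being enough because every infinite subset of the blow-up set $A$ is automatically $g$-unbounded. Isolating that observation, rather than chasing accumulation points, is exactly what makes the argument go through uniformly in the possibly non-metrizable, non-first-countable space $X$ and for an arbitrary distinguished family $\cF$.
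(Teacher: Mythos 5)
Your proposal is correct and follows essentially the same route as the paper's proof: reduce to the converse implication, suppose $g=\sup_{f\in B}|f|$ blows up along distinct points of a compact $K$, arrange (by deleting a cluster point if necessary) that the resulting infinite set is non-closed, and then use \Cref{eq:topdet} together with the $T_1$ property to produce $F\in\cF$ whose trace on that set is non-closed, hence infinite, hence $g$-unbounded. Your explicit isolation of the observation that \emph{every} infinite subset of the blow-up set is $g$-unbounded is exactly the mechanism implicit in the paper's choice $\sup_{f\in B}|f(x_{n+1})|>\max\bigl(n,\ \sup_{f\in B}|f(x_n)|\bigr)$, so the two arguments differ only in bookkeeping.
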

\begin{proof}
  Abstract boundedness in $\tensor[_{\cF}]{C(X)}{}$ means \cite[Proposition 14.5]{trev_tvs} that \Cref{eq:bddeveryk} holds for all $K\in \cF$ (rather than all compact $K$ period), so the condition is clearly formally weaker than \Cref{eq:bddeveryk} itself; the interesting implication is thus the converse. Furthermore, restricting attention to a fixed but arbitrary $K\subseteq X$, there is no loss in assuming $X$ itself is compact (and Hausdorff) to begin with. The goal then becomes to show that if $B\subseteq \tensor[_{\cF}]{C(X)}{}$ is bounded then it is uniformly bounded.

  Suppose not. We can then find points $x_n\in X$, $n\in \bZ_{>0}$ such that
  \begin{equation*}
    \sup_{f\in B}|f(x_{n+1})|>\max\left(n,\ \sup_{f\in B}|f(x_{n})|\right)
    ,\quad
    \forall n\in \bZ_{>0}. 
  \end{equation*}
  These are clearly distinct, so that $\{x_n\}_n$ is an infinite set. It is then either closed, or not. In the former case we can always remove some cluster point from among the $x_n$ so as to break closure, so there is no harm in assuming that $\{x_n\}$ is {\it not} closed.

  Now, the topology-determination property \Cref{eq:topdet} of $\cF$ ensures the existence of some $F\in \cF$ for which $F\cap \{x_n\}$ is not closed. But then that set cannot be finite either, meaning that the family $B$ is not bounded on $F\in \cF$. This, in turn, violates the boundedness assumption and finishes the proof. 
\end{proof}

The extended discussion in \cite[\S 2]{phil1} (and partly recalled above in \Cref{res:whenbarr} \Cref{item:cautioncomplhaus}) makes it clear that the inadequacy of Tychonoff (i.e. $T_{3\frac 12}$, or {\it completely regular} and $T_1$ \cite[Definition 14.8]{wil_top}) spaces for the purpose of studying commutative pro-$C^*$-algebras is intimately linked to the other issue illustrated there \cite[Example 2.12]{phil1}, of considering not just plain spaces but rather spaces equipped with distinguished families of compact sets.

A slightly different take on the matter is that the category we will henceforth denote by $T_{2f,\kappa}$, of functionally $T_2$ $\kappa$-spaces (`f' for `functionally', `$\kappa$' for compact generation; top node of \Cref{eq:howincluded}), realizes a happy medium between two properties that are in tension:
\begin{itemize}
\item being Tychonoff, meaning \cite[Theorem 3.6]{gj_rings} that the space is equipped with {\it weakest (or initial)} topology \cite[Definition 8.9 and Historical Note to \S 8]{wil_top} induced by the family of continuous (real- or complex-valued) functions on the space;

\item and being compactly generated, meaning that the space is equipped with the {\it strongest (or initial)} topology \cite[Problem 9H and Historical Note to \S 9]{wil_top} induced by the compact subsets in the sense of \Cref{eq:topdet}.
\end{itemize}

Correspondingly, deficiencies in the two properties are mended in categorically dual ways, as we now recall.

\begin{notation}\label{not:idempmon}
  Write $\cat{Top}$ for the category of topological spaces.

  \begin{enumerate}[(1)]

  \item We denote by
    \begin{equation*}
      \cat{Top}
      \xrightarrow{\quad\tau\quad}
      \cat{Top}
    \end{equation*}
    the endofunctor associating to each space $X$ the Tychonoff space that $X$ maps into universally (\cite[Theorem 3.9]{gj_rings}, \cite[Problem 14H]{wil_top}).

    The image of $\tau$ is the full {\it reflective} \cite[Definition 3.5.2]{borc_hndbk-1} category $T_{3\frac 12}\subset\cat{Top}$ of Tychonoff spaces, and when regarded as a functor  
    \begin{equation*}
      \cat{Top}
      \xrightarrow{\quad\tau\quad}
      T_{3\frac 12}
    \end{equation*}
    (by a slight abuse of notation) $\tau$ is the {\it left adjoint} \cite[Definition 3.1.4]{borc_hndbk-1} of that inclusion of categories. Moreover, $\tau$ is the {\it idempotent monad} associated \cite[Corollary 4.2.4]{borc_hndbk-2} to the reflective subcategory $T_{3\frac 12}\subset \cat{Top}$.

    The picture pertaining to compact generation is dual. 

  \item\label{item:ktt} Denote by
    \begin{equation*}
      \cat{Top}
      \xrightarrow{\quad\kappa\quad}
      \cat{Top}
    \end{equation*}
    the endofunctor associating to each space $X$ the $\kappa$-space mapping universally into $X$ (akin to the {\it Kelleyfication} of \cite[\S VII.8]{mcl}, but with no assumption of Hausdorff-ness in this generality).

    The image of $\kappa$ is the full {\it coreflective} \cite[dual to Definition 3.5.2]{borc_hndbk-1} category $\cat{Top}_{\kappa}\subset\cat{Top}$ of $\kappa$-spaces, and when regarded as a functor  
    \begin{equation}\label{eq:kappattk}
      \cat{Top}
      \xrightarrow{\quad\kappa\quad}
      \cat{Top}_{\kappa}
    \end{equation}
    $\kappa$ is {\it right adjoint} \cite[p.98, following Definition 3.1.4]{borc_hndbk-1} to the inclusion. Moreover, $\kappa$ is the {\it idempotent comonad} associated via \cite[dual to Corollary 4.2.4]{borc_hndbk-2} to the coreflective subcategory $\cat{Top}_{\kappa}\subset \cat{Top}$.    
  \end{enumerate}
\end{notation}

\begin{remark}\label{re:kelleynini}
  In \cite[\S VII.8]{mcl} (as in \cite[\S 0]{dp_conv-2}) {\it Kelley} spaces are defined as $T_2$ $\kappa$-spaces, and the Kelleyfication mentioned in \Cref{not:idempmon} \Cref{item:ktt} is the right adjoint to the inclusion $T_{2,\kappa}\subset T_2$.

  If a space is already $T_2$ the functor $\kappa$ of \Cref{eq:kappattk} keeps it so, and hence \Cref{eq:kappattk} does indeed specialize to Kelleyfication when restricted to the category of Hausdorff spaces. On the other hand, the inclusion $T_2\subset \cat{Top}$ has \cite[3.1.6.h]{borc_hndbk-1} a {\it left} adjoint; the left and right adjunctions do not play well: the composite inclusion
  \begin{equation}\label{eq:t2kintop}
    \begin{tikzpicture}[auto,baseline=(current  bounding  box.center)]
      \path[anchor=base] 
      (0,0) node (l) {$T_{2,\kappa}$}
      +(2,.5) node (u) {$T_2$}
      +(2,-.5) node (d) {$\cat{Top}_{\kappa}$}
      +(4,0) node (r) {$\cat{Top}$}
      ;
      \draw[right hook->] (l) to[bend left=6] node[pos=.5,auto] {$\scriptstyle $} (u);
      \draw[right hook->] (u) to[bend left=6] node[pos=.5,auto] {$\scriptstyle $} (r);
      \draw[right hook->] (l) to[bend right=6] node[pos=.5,auto,swap] {$\scriptstyle $} (d);
      \draw[right hook->] (d) to[bend right=6] node[pos=.5,auto,swap] {$\scriptstyle $} (r);
    \end{tikzpicture}
  \end{equation}
  is neither a left nor a right adjoint (so there is no universal way of rendering an {\it arbitrary} space both compactly generated {\it and} $T_2$). Indeed, $T_{2,\kappa}$ is (co)complete \cite[\S VII.8, Proposition 2]{mcl}, but
  \begin{itemize}
  \item not preserving products \cite[paragraph preceding Definition 4.1]{steenr_conv}, the inclusion \Cref{eq:t2kintop} cannot \cite[\S V.5, Theorem 1]{mcl} be a right adjoint;

  \item and one can easily construct pairs of morphisms $X\to Y$ in $T_{2,\kappa}$ whose coequalizer in $\cat{Top}$ is not $T_2$, so that \Cref{eq:t2kintop} does not preserve coequalizers either (and hence is not a left adjoint, by \cite[\S V.5, dual to Theorem 1]{mcl}).
  \end{itemize}
\end{remark}

With this in place, and keeping to the convention of denoting (co)restrictions of a functor by the same symbol as the original functor, we have a diagram
\begin{equation}\label{eq:kt}
  \begin{tikzpicture}[auto,baseline=(current  bounding  box.center)]
    \path[anchor=base] 
    (0,0) node (l) {$T_{3\frac 12,\kappa}$}
    +(2,1) node (u) {$T_{2f,\kappa}$}
    +(2,-1) node (d) {$T_{3\frac 12}$}
    +(2,0) node (dash) {$\vdash$}
    ;
    \draw[right hook->] (l) to[bend left=6] node[pos=.5,auto] {$\scriptstyle $} (u);    
    \draw[right hook->] (l) to[bend right=6] node[pos=.5,auto,swap] {$\scriptstyle $} (d);
    \draw[->] (u) to[bend left=20] node[pos=.5,auto] {$\scriptstyle \tau$} (d);
    \draw[->] (d) to[bend left=20] node[pos=.5,auto] {$\scriptstyle \kappa$} (u);
  \end{tikzpicture}
\end{equation}
commutative up to the obvious natural isomorphisms (cf. \Cref{eq:howincluded}). The negative statement in \Cref{pr:ktadj} \Cref{item:ktnoteq} below is yet another  manifestation (albeit a somewhat roundabout one) of the fact that one cannot dispense with distinguished families of compact subsets when handling commutative pro-$C^*$-algebras. 

The symbol `$\vdash$' between the two vertical arrows indicates an adjunction, with the tail pointing towards the left adjoint (as in \Cref{pr:ktadj} \Cref{item:ktisadj} below); the notation is fairly standard (see \cite[Definition 19.3]{ahs}, for instance). 

\begin{proposition}\label{pr:ktadj}
  \begin{enumerate}[(1)]
  \item\label{item:ktisadj} The vertical functors in \Cref{eq:kt} form an adjunction, with $\kappa$ being the right adjoint.

  \item\label{item:ktfactiso} Writing
    \begin{equation*}
      \id\xrightarrow{\quad\eta\quad} \kappa\tau
      \quad\text{and}\quad
      \tau\kappa\xrightarrow{\quad\varepsilon\quad} \id
    \end{equation*}
    for the unit and counit of the adjunction of \Cref{item:ktisadj} respectively, the canonical factorizations \cite[Diagram 3.3]{borc_hndbk-1}
    \begin{equation}\label{eq:2fact}
      \begin{tikzpicture}[auto,baseline=(current  bounding  box.center)]
        \path[anchor=base] 
        (0,0) node (l) {$\tau$}
        +(2,.5) node (u) {$\tau\kappa\tau$}
        +(4,0) node (r) {$\tau$}
        ;
        \draw[->] (l) to[bend left=6] node[pos=.5,auto] {$\scriptstyle \tau\eta$} (u);
        \draw[->] (u) to[bend left=6] node[pos=.5,auto] {$\scriptstyle \varepsilon\tau$} (r);
        \draw[->] (l) to[bend right=6] node[pos=.5,auto,swap] {$\scriptstyle \id$} (r);
      \end{tikzpicture}
      \quad\text{and}\quad
      \begin{tikzpicture}[auto,baseline=(current  bounding  box.center)]
        \path[anchor=base] 
        (0,0) node (l) {$\kappa$}
        +(2,.5) node (u) {$\kappa\tau\kappa$}
        +(4,0) node (r) {$\kappa$}
        ;
        \draw[->] (l) to[bend left=6] node[pos=.5,auto] {$\scriptstyle \eta\kappa$} (u);
        \draw[->] (u) to[bend left=6] node[pos=.5,auto] {$\scriptstyle \kappa\varepsilon$} (r);
        \draw[->] (l) to[bend right=6] node[pos=.5,auto,swap] {$\scriptstyle \id$} (r);
      \end{tikzpicture}
    \end{equation}
    consist of natural isomorphisms.

  \item\label{item:ktcoreseq} The adjunction of \Cref{item:ktisadj} restricts to an equivalence between the essential images of $\kappa$ and $\tau$. 

  \item\label{item:ktisidemp} The monad $\kappa\tau$ and comonad $\tau\kappa$ attached \cite[Proposition 4.2.1]{borc_hndbk-2} to the adjunction are both idempotent.         
    
  \item\label{item:ktnoteq} $\kappa$ and $\tau$ are both faithful, but both fail to be either full or essentially surjective. In particular, $(\kappa,\tau)$ is not a pair of mutually-inverse equivalences. 
  \end{enumerate}
\end{proposition}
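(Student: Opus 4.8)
The plan is to establish \Cref{item:ktisadj} directly, then recognize \Cref{item:ktfactiso}, \Cref{item:ktcoreseq} and \Cref{item:ktisidemp} as three faces of the single statement that the adjunction is \emph{idempotent}, and finally dispatch \Cref{item:ktnoteq} with explicit witnesses drawn from the examples already in play.

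\medskip

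\textbf{\Cref{item:ktisadj}.} First I would confirm the functors land where claimed: $\tau Y$ is Tychonoff by construction, and for Tychonoff $Z$ the Kelleyfication $\kappa Z$ carries a topology finer than that of $Z$, so the continuous functions separating the points of $Z$ remain continuous and $\kappa Z$ is still functionally $T_2$, i.e. $\kappa Z\in T_{2f,\kappa}$. The adjunction is then just the concatenation of the ambient reflection $\tau\dashv(T_{3\frac 12}\hookrightarrow\cat{Top})$ and coreflection $(\cat{Top}_{\kappa}\hookrightarrow\cat{Top})\dashv\kappa$: for $Y\in T_{2f,\kappa}$ and $Z\in T_{3\frac 12}$,
\begin{equation*}
  \Hom_{T_{3\frac 12}}(\tau Y,Z)\cong\Hom_{\cat{Top}}(Y,Z)\cong\Hom_{\cat{Top}_{\kappa}}(Y,\kappa Z)=\Hom_{T_{2f,\kappa}}(Y,\kappa Z),
\end{equation*}
the outer identifications using the fullness of $T_{3\frac 12}\subset\cat{Top}$ and $T_{2f,\kappa}\subset\cat{Top}_{\kappa}$, with naturality inherited from the two ambient adjunctions.

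\medskip

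\textbf{\Cref{item:ktfactiso} and \Cref{item:ktisidemp}.} By the triangle identities the two composites in \Cref{eq:2fact} are the relevant identities, so it suffices to prove one arrow of each pair invertible; I would show $\tau\eta$ and $\eta\kappa$ are isomorphisms, whence $\varepsilon\tau$ and $\kappa\varepsilon$ are forced. Since every unit and counit here is the identity on underlying point-sets, each claim is a comparison of topologies. For $\eta\kappa$ I would observe that, for Tychonoff $Z$,
\begin{equation*}
  Z\preceq\tau\kappa Z\preceq\kappa Z,
\end{equation*}
the middle space carrying the weak topology of $C(\kappa Z)\supseteq C(Z)$; since passing to a coarser topology can only enlarge the family of compacta, while $\kappa Z$ and $Z$ have the \emph{same} compact subspaces (with identical subspace topologies, a standard property of Kelleyfication), a squeeze forces all three to share one and the same family of compact subspaces. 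Hence $\kappa\tau\kappa Z$ and $\kappa Z$ are compactly generated by identical families and coincide, i.e. $\eta_{\kappa Z}$ is an isomorphism. For $\tau\eta$ I would instead prove $C(\kappa\tau Y)=C(Y)$ for $Y\in T_{2f,\kappa}$: ``$\supseteq$'' is mere restriction of continuous functions, while ``$\subseteq$'' holds because a function continuous on every compactum of $\tau Y$ is in particular continuous on every compactum of $Y$ (these embed into $\tau Y$ with unchanged subspace topology, being compact-to-Hausdorff), hence continuous on $Y$ as $Y$ is a $\kappa$-space; equality of the two function algebras gives $\tau\kappa\tau Y=\tau Y$. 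These two topological lemmas are the technical heart of the proposition, and the squeeze identifying the compact subspaces of $Z$, $\tau\kappa Z$ and $\kappa Z$ is the step I expect to demand the most care. Idempotency of the monad $\kappa\tau$ and the comonad $\tau\kappa$ (\Cref{item:ktisidemp}) then follows formally from the monad identities: $\tau\eta$ invertible makes $\kappa\tau\eta$ invertible, so the multiplication $\kappa\varepsilon\tau=(\kappa\tau\eta)^{-1}$ is invertible, and dually for $\tau\kappa$.

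\medskip

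\textbf{\Cref{item:ktcoreseq}.} This is the standard behavior of an idempotent adjunction: $\tau$ and $\kappa$ restrict to mutually inverse equivalences between the full subcategory of $Y$ with $\eta_Y$ invertible (the essential image of $\kappa$) and the full subcategory of $Z$ with $\varepsilon_Z$ invertible (the essential image of $\tau$), for which I would simply cite \cite[\S 4.2]{borc_hndbk-2}.

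\medskip

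\textbf{\Cref{item:ktnoteq}.} Faithfulness is immediate, since $\tau$ and $\kappa$ leave underlying sets and underlying maps untouched and a continuous map is determined by its underlying set-map. The remaining failures I would witness by two spaces. Let $X_0$ be the non-regular functionally $T_2$ first-countable (hence $\kappa$-) space of \cite[Problem 3J]{gj_rings} recalled in \Cref{res:whenbarr} \Cref{item:cautioncomplhaus} --- the usual line with $\{1/n\}$ made closed --- for which $C(X_0)=C(\bR)$, so that $\tau X_0\cong\bR$ and $\kappa\tau X_0\cong\kappa\bR=\bR\neq X_0$. Then $\eta_{X_0}$ is not invertible, so $X_0$ lies outside the essential image of $\kappa$, whence $\kappa$ is not essentially surjective; and taking $Y=\bR$, $Y'=X_0$, the map $\id_{\bR}\in\Hom_{T_{3\frac 12}}(\tau Y,\tau Y')$ is not of the form $\tau f$ --- its only candidate underlying map $\bR\to X_0$ (the identity) fails to be continuous into the strictly finer $X_0$ --- so $\tau$ is not full. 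For the complementary failures let $W$ be a Tychonoff space that is not a $k_{\bR}$-space, i.e. one carrying a real-valued function continuous on every compactum but not continuous; then $\varepsilon_W$ is not invertible (as $C(\kappa W)\supsetneq C(W)$ makes $\tau\kappa W\neq W$), so $W$ lies outside the essential image of $\tau$, whence $\tau$ is not essentially surjective, and the inclusion $C(W)\subsetneq C(\kappa W)=\Hom_{T_{2f,\kappa}}(\kappa W,\kappa\bR)$ exhibits $\kappa$ as non-full. Any one of these failures already shows that $(\kappa,\tau)$ is not a pair of mutually inverse equivalences.
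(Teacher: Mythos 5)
Your proof is correct, and in parts (1), (3), (4) and most of (5) it runs along essentially the paper's own lines: part (1) is the same concatenation of the ambient reflection $\tau\dashv(T_{3\frac 12}\hookrightarrow\cat{Top})$ with the coreflection $(\cat{Top}_{\kappa}\hookrightarrow\cat{Top})\dashv\kappa$; parts (3) and (4) are the same formal consequences of (2); faithfulness in (5) is likewise immediate, and your first witness (the Gillman--Jerison space of \cite[Problem 3J]{gj_rings}) is exactly the paper's \Cref{ex:kt1} for the failure of $\eta$ to be invertible. The genuine divergence is in part (2). The paper's proof is a soft two-line squeeze: on the categories at hand $\tau$ only weakens and $\kappa$ only strengthens topologies, so $\eta$ and $\varepsilon$ are identity maps on underlying sets, and each triangle in \Cref{eq:2fact} then consists of continuous identity-on-points maps in both directions; the middle topology is thus simultaneously coarser and finer than the outer one, and \emph{all four} arrows are homeomorphisms at once, with no topological analysis required. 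You instead prove the two harder arrows $\tau\eta$ and $\eta\kappa$ invertible via concrete lemmas --- the equality $C(\kappa\tau Y)=C(Y)$ for $Y\in T_{2f,\kappa}$, and the coincidence of the compacta (with their subspace topologies) of $Z$, $\tau\kappa Z$ and $\kappa Z$ for Tychonoff $Z$ --- and then force $\varepsilon\tau$ and $\kappa\varepsilon$ by the triangle identities. Both routes are sound; yours is longer but isolates precisely the topological mechanisms behind the idempotency (compact Hausdorff subspaces embedding with unchanged topology, function algebras surviving the reflection), which is essentially the machinery the paper only deploys later, in the proof of \Cref{le:ktextend}, whereas the paper's argument buys brevity by exploiting the set-level triviality of unit and counit. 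One small incompleteness on your side: in (5) you posit ``a Tychonoff space that is not a $k_{\bR}$-space'' without exhibiting one. Existence is true but not self-evident, and the paper supplies the explicit witness $X=D\sqcup\{p\}\subseteq\beta D$ of \Cref{ex:tk1} (all of whose compact subsets are finite, so that $\kappa X$ is discrete while $X$ is not); you should either cite such an example or reproduce it to close the argument.
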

\begin{proof}
  \Cref{item:ktisadj} is a formal consequence of $\tau$ and $\kappa$ each being adjoint to the full inclusion of a category containing the domain of the other, as recalled in \Cref{not:idempmon}: for $Y\in T_{3\frac 12}$ and $X\in T_{2f,\kappa}$
  \begin{equation*}
    T_{3\frac 12}(\tau X,\ Y)
    \cong
    \cat{Top}(X,\ Y)
    \cong
    T_{2f,\kappa}(X,\ \kappa Y)
  \end{equation*}
  naturally. 
  
  To prove \Cref{item:ktfactiso}, observe that $\kappa$ leaves the underlying set of a topological space in place and strengthens the topology, whereas $\tau$ weakens it. It follows that on each space the two diagrams in \Cref{eq:2fact} are factorizations of identity maps through a weaker (left hand) and stronger (right hand) topology, and hence homeomorphisms. 
  
  \Cref{item:ktcoreseq} and \Cref{item:ktisidemp} follow immediately from \Cref{item:ktfactiso}. 
  
  As for \Cref{item:ktnoteq}, faithfulness is obvious, since the two functors do not in fact alter morphisms at all. It follows from \Cref{item:ktfactiso} that the objects
  \begin{equation*}
    X\in\text{essential image of }\kappa\subseteq T_{2f,\kappa}
  \end{equation*}
  are precisely those for which $X\xrightarrow{\eta}\kappa\tau X$ is an isomorphism, and similarly (or rather dually) for $\tau$. The full faithfulness of $\tau$ (i.e. its fullness, since faithfulness we already have) and the essential surjectivity of $\kappa$ are thus \cite[Proposition 3.4.1]{borc_hndbk-1} both equivalent to $\eta$ being an isomorphism; the claim is that it isn't. Dually, we also have to prove that the counit $\tau\kappa\xrightarrow{\varepsilon}\id$ is not a natural isomorphism; \Cref{ex:kt1,ex:tk1} show this, respectively.
\end{proof}

\begin{example}\label{ex:kt1}
  Consider the space $E$ of \cite[Problem 3J]{gj_rings} (mentioned also in \Cref{res:whenbarr} \Cref{item:cautioncomplhaus}, where we argue that the space does belong to $T_{2f,\kappa}$): the real line $\bR$, with its usual topology minimally extended so as to make the set $\left\{\frac 1n\right\}_{n\in\bZ_{>0}}$ closed. 

  I claim that the canonical map $E\xrightarrow{\quad} \kappa\tau E$ is not a homeomorphism. To see this, note that $\tau E$ is nothing but $\bR$ by \cite[3J 3.]{gj_rings}, so in particular $[0,1]\subset E$ will be compact in both $\tau E$ and $\kappa\tau E$. It is not \cite[3J 4.]{gj_rings}, though, (even countably) compact in $E$ itself.
\end{example}

\begin{example}\label{ex:tk1}
  Anything resembling \cite[Example 2.12]{phil1} will do: consider some infinite set $D$ and set
  \begin{equation*}
    X:=D\sqcup\{p\}\subset \beta D,\quad p\in\beta D\setminus D
  \end{equation*}
  with the subspace topology, where $\beta D$ denotes the {\it Stone-\v{C}ech compactification} \cite[\S 6]{gj_rings} of $D$ equipped with its discrete topology. Plainly, $X$ is $T_{3\frac 12}$ because \cite[Theorem 3.14]{gj_rings} it is by definition subspace of a compact Hausdorff space.
  
  The claim is that the canonical map $\tau\kappa X\xrightarrow{\quad}X$ is not a homeomorphism. Much more is true: $\kappa X$ (and hence $\tau\kappa X$) is discrete. This, in turn, follows from the observation that a subset $Y\subseteq X$ is compact precisely when it is finite (see also \cite[Example 12.5]{mich_a0}). Indeed, an infinite $D'\subseteq D$ can be partitioned into two infinite subsets $D'_i$, $i=0,1$. At least one of them has \cite[Theorem 6.5]{gj_rings} a cluster point
  \begin{equation*}
    q\ne p,\quad q\in \beta D,
  \end{equation*}
  so that $D'\sqcup\{p\}$ cannot be compact. 
\end{example}

As indicated above, mending the pathology noted in \Cref{pr:ktadj} \Cref{item:ktnoteq} will require equipping spaces with families of compact subsets, as in \cite[Definition 2.5]{phil1} (so that theme recurs). We need some terminology and notation.

\begin{definition}\label{def:funcdist}
  Let $X$ be a topological space. A family $\cF$ of compact subspaces of $X$ is {\it functionally distinguished} if it
  \begin{itemize}
  \item contains the singletons;

  \item is closed under taking compact subsets;

  \item is closed under forming finite (or equivalently, binary) unions;
    
  \item and determines the (real- or, equivalently, complex-valued) continuous functions on $X$ in the sense that
    \begin{equation}\label{eq:funcdet}
      X\xrightarrow{\ f\ }\bR\text{ is continuous }
      \iff
      f|_F\text{ is continuous },\ \forall F\in \cF.
    \end{equation}
  \end{itemize}
  For any {\it concrete} category $\cC$ of topological spaces (i.e. category equipped with a distinguished faithful functor to $\cat{Top}$; what \cite[Definition 5.1]{ahs} would call {\it $\cat{Top}$-concrete}) we write
  \begin{itemize}
  \item $\tensor[_{\Phi}]{\cC}{}$ for the category consisting of objects in $\cC$ equipped with distinguished families of compact subsets in the sense of \Cref{res:whenbarr} \Cref{item:fcx}, with morphisms required to preserve members of the distinguished families.

  \item and similarly, $\tensor[_{f\Phi}]{\cC}{}$ for the analogue with {\it functionally} distinguished families instead. 
  \end{itemize}
\end{definition}

\begin{remark}\label{re:whatfdistmeans}
  Being {\it functionally} distinguished, in other words, differs from being distinguished only in the fourth bullet point of \Cref{def:funcdist} (cf. \cite[Definition 2.5]{phil1}, as recalled in \Cref{res:whenbarr} \Cref{item:fcx}): distinguished families determine the topology, whereas functionally distinguished ones only determine the continuous functions on a space (and hence capture {\it less} information). 
\end{remark}

The functors $\kappa$ and $\tau$ of \Cref{eq:kt} transfer over to the distinguished setting:

\begin{lemma}\label{le:ktextend}
  The vertical functors in \Cref{eq:kt} extend to an adjunction
  \begin{equation*}
    \begin{tikzpicture}[auto,baseline=(current  bounding  box.center)]
      \path[anchor=base] 
      (0,0) node (l) {$\tensor[_{\Phi}]{T}{_{2f,\kappa}}$}
      +(4,0) node (r) {$\tensor[_{f\Phi}]{T}{_{3\frac 12}}$}
      +(2,0) node (m) {$\bot$}
      ;
      \draw[->] (l) to[bend left=20] node[pos=.5,auto] {$\scriptstyle \tau$} (r);
      \draw[->] (r) to[bend left=20] node[pos=.5,auto] {$\scriptstyle \kappa$} (l);
    \end{tikzpicture}
  \end{equation*}
  by acting identically on families of compact subsets. 
\end{lemma}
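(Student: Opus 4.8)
The plan is to define both functors on objects by leaving the underlying set and the distinguished family untouched, changing only the ambient topology: $\tau(X,\cF):=(\tau X,\cF)$ and $\kappa(Y,\cG):=(\kappa Y,\cG)$. The first thing I would record is a \emph{rigidity} observation that makes this sensible: since a functionally $T_2$ space is Hausdorff and a compact Hausdorff topology is both minimal among Hausdorff topologies and maximal among compact ones, neither $\tau$ (which only weakens) nor $\kappa$ (which only strengthens) alters the subspace topology on any set that was already compact. Concretely, for $F$ compact in $X$ the inclusion $F\hookrightarrow X$ factors through both $\tau X$ and $\kappa X$ as a homeomorphism onto its image (the $\kappa$ case using the universal property of the coreflection on the compact — hence $\kappa$- — space $F$). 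Consequently the compact subsets of members of $\cF$, together with their subspace topologies, are literally the same in $X$, $\tau X$ and $\kappa X$, so the three set-theoretic closure conditions in \Cref{def:funcdist} (singletons, compact subsets, finite unions) transport verbatim.

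With rigidity in hand, well-definedness of $\tau$ is the easy direction. Because $\tau X$ carries the initial topology for the continuous real-valued functions, one has $C(\tau X)=C(X)$; and because $\cF$ is a (plain) distinguished family on the $\kappa$-space $X$, continuity of a function is detected on the members of $\cF$ via \Cref{eq:topdet}. Combining these with rigidity gives exactly the functional-determination condition \Cref{eq:funcdet} for $\cF$ on $\tau X$, so $(\tau X,\cF)\in\tensor[_{f\Phi}]{T}{_{3\frac12}}$.

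The main obstacle is the well-definedness of $\kappa$: I must show that a \emph{functionally} distinguished family $\cG$ on a Tychonoff space $Y$ becomes a \emph{plain} distinguished family on $\kappa Y$, i.e. that $\cG$ determines the topology of $\kappa Y$ in the sense of \Cref{eq:topdet}. Writing $\sigma_\cG$ for the $\kappa$-topology generated by $\cG$ (declare $A$ closed iff $A\cap F$ is closed for every $F\in\cG$), the point is precisely the identity $\sigma_\cG=\kappa Y$. One inclusion is immediate: since $\cG$ consists of compact sets, $\sigma_\cG\succeq\kappa Y$. For the reverse I would reduce, via rigidity and the fact that both $\sigma_\cG$ and $\kappa Y$ are $\kappa$-spaces generated by their compact subsets, to the single statement that for every compact $K\subseteq Y$ the topology $\sigma_\cG$ induces on $K$ coincides with the original compact Hausdorff one — equivalently, that the inclusion $K\hookrightarrow\sigma_\cG$ stays continuous. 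This is where functional-distinguishedness must be used in an essential way: if $\sigma_\cG$ strictly refined some compact $K$, witnessed by a set $A\subseteq K$ with $A\cap F$ closed for all $F\in\cG$ yet with a point $x\in\overline A^{Y}\cap K\setminus A$, the goal is to manufacture from $(x,A)$ a function on $Y$ that restricts continuously to every member of $\cG$ but is globally discontinuous, contradicting \Cref{eq:funcdet}. I expect the construction of this function — gluing the Urysohn-type separations available on each (normal, compact Hausdorff) $F\in\cG$ into a single globally defined but discontinuous function — to be the technical heart; it is exactly the step where the naive mechanism by which one might try to strictly refine a compact subset (say, declaring a cofinal club closed in $[0,\omega_1]$) is seen to create new continuous functions, thereby destroying functional-distinguishedness.

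Once the two functors are seen to land in the stated categories, the adjunction itself is essentially formal. Morphisms in either decorated category are the underlying continuous maps that send members of the source family into the target family, and neither $\tau$ nor $\kappa$ changes underlying maps or families; hence the natural hom-set bijection $T_{3\frac12}(\tau X,Y)\cong T_{2f,\kappa}(X,\kappa Y)$ of \Cref{pr:ktadj} restricts verbatim to family-preserving maps on both sides (the family-preservation condition is literally the same condition before and after transposition), and its naturality is inherited. This yields the desired adjunction between $\tensor[_{\Phi}]{T}{_{2f,\kappa}}$ and $\tensor[_{f\Phi}]{T}{_{3\frac12}}$, with $\kappa$ the right adjoint.
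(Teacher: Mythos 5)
Your architecture tracks the paper's quite closely: extend both functors by leaving the families untouched, establish first that $\tau$ and $\kappa$ do not disturb the subspace topology on compact subsets (the paper phrases this as preservation of embeddings of compact Hausdorff subspaces), prove the $\tau$-direction from the final-topology property \Cref{eq:topdet} together with $C(\tau X)=C(X)$, and inherit the adjunction formally from the hom-set bijection of \Cref{pr:ktadj} since the morphism conditions transpose verbatim. All of that is correct and is essentially what the paper does. The genuine gap is the $\kappa$-direction, which you rightly isolate as the crux --- that a functionally distinguished $\cG$ on a Tychonoff $Y$ recovers, via its traces $F\cap K$, the subspace topology on every compact $K\subseteq Y$ --- but then do not prove: you announce a contradiction strategy that would ``manufacture,'' from a trace-closed but non-closed $A\subseteq K$ and a point $x\in\overline{A}\setminus A$, a function on $Y$ continuous on every $F\in\cG$ yet globally discontinuous, and you defer the construction as ``the technical heart.'' That deferred construction \emph{is} the content of the lemma, and the sketch faces two unaddressed obstructions: the final topology $\sigma_{\cG}$ on $K$ need not be completely regular or normal, so there is no a priori $\sigma_{\cG}$-continuous function separating $x$ from $A$ even locally on $K$; and the per-member ``Urysohn-type separations'' on the various $F\in\cG$ come with no compatibility on overlaps, while the desired function must be a single globally defined one restricting continuously to \emph{every} $F\in\cG$, including members far from $K$. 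Nothing in the proposal addresses this coherence problem.

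The paper closes this step with a short positive argument running in the opposite direction, constructing no discontinuous function at all: compact subsets of a Tychonoff space are $C^*$-embedded, i.e.\ continuous functions extend from $K$ to $Y$ \cite[\S 3.11 (c)]{gj_rings}, so the original compact Hausdorff topology on $K$ is the weak topology induced by the restrictions of $C(Y)$; since $\cG$ satisfies \Cref{eq:funcdet}, this identifies the topology on $K$ with the strong topology generated by the traces $F\cap K$, $F\in\cG$, which is exactly what well-definedness of $\kappa$ requires. To repair your write-up, replace the projected gluing construction with this extension argument (or genuinely carry out the missing construction); as it stands, the central claim of the lemma is asserted rather than proven.
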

\begin{proof}
  Once we argue that the functors do indeed extend as claimed, the adjunction will be no more difficult to prove than in \Cref{pr:ktadj}, so we will forego that portion of the argument.

  To prove the extension claim, note first that the original $\kappa$ and $\tau$ of \Cref{eq:kt} preserve embeddings of compact Hausdorff subspaces (this is not so for the functors originally defined on all of $\cat{Top}$ in \Cref{not:idempmon}!), so at the very least, it does make sense to operate identically on subsets of compact spaces. The crux of the matter is to argue that
  \begin{enumerate}[(I)]
  \item\label{item:d2fd} if the family $\cF\subseteq 2^X$ of compact subsets of some $X\in T_{2f,\kappa}$ is distinguished then it becomes functionally distinguished in $\tau X$;

  \item\label{item:fd2d} and vice versa for $\kappa$. 
  \end{enumerate}

  Claim \Cref{item:d2fd} is just an unpacking of the various universality properties: $\cF$, being distinguished, $X$ is the {\it directed colimit} \cite[Example 11.28 (4)]{ahs} in $\cat{Top}$ of the inclusions $F\subseteq X$, $F\in\cF$, so that a function $X\to Y$ to {\it any} topological space (such as $Y=\bR$, say) is continuous precisely when its restriction to every $F\in \cF$ is.

  As for \Cref{item:fd2d}: given an object $(X,\cF)$ of $\tensor[_{f\Phi}]{T}{_{3\frac 12}}$, the closed subsets of $\kappa X$ are those subsets of $X$ whose intersection with every compact subset of $X$ is closed; what we have to argue is that under the present assumption that $\cF$ is functionally distinguished, it is enough to intersect with every $F\in \cF$.

  That claim, in turn, follows from the fact that the original topology on every compact $K\subseteq X$ is the weak topology induced by $C(X)$ (since $K$ itself is compact Hausdorff and hence $T_{3\frac 12}$ \cite[Theorem 3.14]{gj_rings}, and continuous functions extend \cite[\S 3.11 (c)]{gj_rings} from $K$ to $X$), so that, $\cF$ being functionally distinguished, $K$ is also equipped with the strong topology generated by the intersections $F\cap K$ for $F\in \cF$.
\end{proof}

In the following statement, note that the category $T_{3\frac 12,\kappa}$ of Tychonoff $\kappa$-spaces embeds naturally into both $\tensor[_{\Phi}]{T}{_{2f,\kappa}}$ and $\tensor[_{f\Phi}]{T}{_{3\frac 12}}$ by simply equipping every such space (Tychonoff {\it and} compactly generated) with its full family of compact subspaces. More generally, for a Tychonoff $\kappa$-space a family $\cF$ is distinguished {\it precisely} when it is functionally distinguished, so we have an identification
\begin{equation*}
  \tensor[_{\Phi}]{T}{_{3\frac 12,\kappa}}
  =
  \tensor[_{f\Phi}]{T}{_{3\frac 12,\kappa}}
\end{equation*}

\begin{theorem}\label{th:ktcorr}
  In the diagram
  \begin{equation}\label{eq:ktdisteq}
    \begin{tikzpicture}[auto,baseline=(current  bounding  box.center)]
      \path[anchor=base] 
      (0,0) node (l) {$\tensor[_\Phi]{T}{_{3\frac 12,\kappa}}$}
      +(-2,0) node (ll) {$T_{3\frac 12,\kappa}$}
      +(2,1) node (u) {$\tensor[_{\Phi}]{T}{_{2f,\kappa}}$}
      +(2,-1) node (d) {$\tensor[_{f\Phi}]{T}{_{3\frac 12}}$}
      +(2,0) node (eq) {$\simeq$}
      ;
      \draw[right hook->] (ll) to[bend left=0] node[pos=.5,auto] {$\scriptstyle $} (l);
      \draw[right hook->] (l) to[bend left=6] node[pos=.5,auto] {$\scriptstyle $} (u);
      \draw[right hook->] (l) to[bend right=6] node[pos=.5,auto,swap] {$\scriptstyle $} (d);
      \draw[->] (u) to[bend left=20] node[pos=.5,auto] {$\scriptstyle \tau$} (d);
      \draw[->] (d) to[bend left=20] node[pos=.5,auto] {$\scriptstyle \kappa$} (u);
    \end{tikzpicture}
  \end{equation}
  commutative up to the obvious natural isomorphisms, the vertical functors are mutually inverse equivalences with $\kappa$ right adjoint to $\tau$. 
\end{theorem}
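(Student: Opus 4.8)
The adjunction itself, with $\kappa$ right adjoint to $\tau$, is already furnished by \Cref{le:ktextend}; what remains is to upgrade it to an adjoint equivalence by checking that its unit $\id\xrightarrow{\eta}\kappa\tau$ and counit $\tau\kappa\xrightarrow{\varepsilon}\id$ are natural isomorphisms. Because both $\tau$ and $\kappa$ leave the underlying set of a space untouched (one weakening, the other strengthening the topology) and act identically on the distinguished families, every component of $\eta$ and of $\varepsilon$ is the identity on the underlying set together with the identity on $\cF$. Naturality is thus automatic, and the only issue is whether these set-level identities are homeomorphisms, i.e.\ whether the two relevant topologies coincide. The plan is to verify this separately for $\eta$ and for $\varepsilon$, each time exploiting precisely the defining feature of the family: ``distinguished $=$ determines the topology'' versus ``functionally distinguished $=$ determines the continuous functions''.

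For the unit, fix $(X,\cF)\in\tensor[_\Phi]{T}{_{2f,\kappa}}$. Since $\cF$ is distinguished, the topology of $X$ is the strong topology \eqref{eq:topdet}: a set is closed exactly when it meets every $F\in\cF$ in a closed subset. By the proof of \Cref{le:ktextend} the same $\cF$ becomes functionally distinguished in $\tau X$, and there $\kappa$ produces the space whose closed sets are again those meeting each $F\in\cF$ in a closed subset (functional distinguishedness is what lets one restrict from all compact subsets to members of $\cF$). Thus $X$ and $\kappa\tau X$ are both the strong topology relative to $\cF$, and they agree as soon as each $F\in\cF$ carries the same subspace topology in $X$ and in $\tau X$. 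This is where functional $T_2$-ness enters: every $F$ is compact Hausdorff, hence already Tychonoff, so its topology is the initial one induced by $C(F)$; as $\tau$ neither adds nor removes continuous real-valued functions, and these extend from $F$ to the ambient space, the passage from $X$ to $\tau X$ does not disturb the subspace topology on any $F$ (the stability already invoked in the proof of \Cref{le:ktextend}). The two strong topologies therefore agree and $\eta_X$ is a homeomorphism.

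For the counit, fix $(Y,\cF)\in\tensor[_{f\Phi}]{T}{_{3\frac 12}}$. As $Y$ is Tychonoff its topology is the initial topology for $C(Y)$, and likewise $\tau\kappa Y$ carries the initial topology for $C(\kappa Y)$; it therefore suffices to prove $C(\kappa Y)=C(Y)$. The inclusion $C(Y)\subseteq C(\kappa Y)$ is clear, $\kappa$ only strengthening the topology. Conversely, a function continuous on $\kappa Y$ restricts continuously to each $F\in\cF$ (again these subspace topologies are unchanged by $\kappa$), whereupon \eqref{eq:funcdet}, the defining property of a functionally distinguished family in $Y$, promotes it to a function continuous on $Y$. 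Hence the two function algebras coincide, the initial topologies agree, and $\varepsilon_Y$ is a homeomorphism.

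With $\eta$ and $\varepsilon$ shown to be natural isomorphisms, $\tau$ and $\kappa$ are mutually inverse equivalences; commutativity of \eqref{eq:ktdisteq} up to natural isomorphism is immediate, since on a Tychonoff $\kappa$-space equipped with its full family both composites reduce to the identity (a full family being at once distinguished and functionally distinguished). The one genuinely delicate ingredient --- that neither $\tau$ nor $\kappa$ alters the subspace topology on a compact Hausdorff member of $\cF$ --- is exactly the stability already established inside the proof of \Cref{le:ktextend}, so no new obstacle arises there. The conceptual crux is simply the observation that reconstructing a $\kappa$-space consumes the topologies of the $F\in\cF$, while reconstructing a Tychonoff space consumes only its continuous functions, and these two data are preserved by $\tau$ and $\kappa$ respectively.
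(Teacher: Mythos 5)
Your proposal is correct and follows essentially the same route as the paper: the adjunction is imported from \Cref{le:ktextend}, and the unit and counit are checked to be homeomorphisms on unchanged underlying sets by playing the strong topology generated by $\cF$ against the weak topology generated by continuous functions. You merely spell out in detail (notably the counit via $C(\kappa Y)=C(Y)$, and the preservation of subspace topologies on members of $\cF$) what the paper dismisses as ``virtually a tautology,'' treating only the unit explicitly.
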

\begin{proof}
  The adjunction claim is part of \Cref{le:ktextend}, and the fact that the unit and counit are isomorphisms is virtually a tautology.

  Consider, for instance, the unit $\id\xrightarrow{\eta}\kappa\tau$. One starts with a functionally $T_2$ $\kappa$-space $X$ whose topology is defined by a family $\cF$ of compact sets, applies $\tau$ so as to weaken that topology into a Tychonoff one, equipped with {\it the same} (now only {\it functionally} distinguished) family $\cF$, and then applies $\kappa$ to strengthen {\it that} topology back into the one generated by $\cF$. Plainly, we have the selfsame topology on $X$ that we began with. 
\end{proof}

\begin{remark}
  \Cref{th:ktcorr} is not surprising, in light of \Cref{re:whatfdistmeans}. As the latter suggests, there is a reciprocity in passing back and forth between $\tensor[_{f\Phi}]{T}{_{3\frac 12}}$ and $\tensor[_{\Phi}]{T}{_{2f,\kappa}}$: the stronger requirement that the topology be determined by continuous functions, which is effectively what the $T_{3\frac 12}$ property means \cite[Theorem 3.6]{gj_rings}, is compensated for by equipping said spaces with only {\it functionally} distinguished families on the $\tensor[_{f\Phi}]{T}{_{3\frac 12}}$ side of the equivalence. 
\end{remark}

Transporting \cite[Theorem 2.7]{phil1} over to Tychonoff spaces along the equivalence of \Cref{th:ktcorr}, we have the following alternative description of unital commutative pro-$C^*$-algebras. 

\begin{corollary}\label{cor:alsot312}
  The functor
  \begin{equation*}
    \tensor[_{f\Phi}]{T}{_{3\frac 12}}
    \ni (X,\cF)
    \xmapsto{\quad}
    \tensor[_\cF]{C(X)}{}
  \end{equation*}
  is a contravariant equivalence between the category of Tychonoff spaces equipped with functionally distinguished families of compact sets and that of unital commutative pro-$C^*$-algebras.  \qedhere
\end{corollary}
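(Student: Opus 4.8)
The plan is to exhibit the stated functor as the composite of the equivalence $\kappa$ of \Cref{th:ktcorr} with the contravariant equivalence of \cite[Proposition 2.6 and Theorem 2.7]{phil1}. The latter identifies $\tensor[_{\Phi}]{T}{_{2f,\kappa}}$ contravariantly with the category of unital commutative pro-$C^*$-algebras via $(Y,\cF)\mapsto\tensor[_\cF]{C(Y)}{}$ (this is the form recorded in \Cref{res:whenbarr} \Cref{item:fcx}), while \Cref{th:ktcorr} supplies a covariant equivalence $\kappa\colon\tensor[_{f\Phi}]{T}{_{3\frac12}}\to\tensor[_{\Phi}]{T}{_{2f,\kappa}}$. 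Composing, one obtains a contravariant equivalence from $\tensor[_{f\Phi}]{T}{_{3\frac12}}$ to the category of unital commutative pro-$C^*$-algebras, carrying $(X,\cF)$ to $\tensor[_\cF]{C(\kappa X)}{}$; it remains only to identify this with the functor of the statement.

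The crux is therefore the identity
\begin{equation*}
  \tensor[_\cF]{C(X)}{}=\tensor[_\cF]{C(\kappa X)}{}
\end{equation*}
of topological algebras, for every $(X,\cF)\in\tensor[_{f\Phi}]{T}{_{3\frac12}}$. First I would check that $X$ and $\kappa X$ support the very same continuous scalar functions. Since $\kappa X$ is the directed colimit in $\cat{Top}$ of the inclusions $F\hookrightarrow X$, $F\in\cF$ (exactly as exploited in the proof of \Cref{le:ktextend}), a function is continuous on $\kappa X$ precisely when each of its restrictions to an $F\in\cF$ is continuous; by the functional-distinguishedness \Cref{eq:funcdet} of $\cF$ this is in turn precisely continuity on $X$. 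Hence $C(X)=C(\kappa X)$ as unital $*$-algebras. As both sides are then topologized by uniform convergence over the single common family $\cF$, their underlying locally convex topologies coincide as well, which yields the displayed equality.

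Finally the two functors agree on morphisms: $\kappa$ does not touch the underlying continuous map between spaces (it merely strengthens topologies), and the pro-$C^*$-morphism produced by Phillips' functor is in each case pullback of functions along that same map. Since the function algebras themselves are unchanged, the induced algebra maps are literally identical, so naturality is automatic. Combining the three observations shows that $(X,\cF)\mapsto\tensor[_\cF]{C(X)}{}$ coincides with the composite equivalence, hence is itself a contravariant equivalence with the category of unital commutative pro-$C^*$-algebras.

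The one substantive step is the function-algebra identity $C(X)=C(\kappa X)$; everything else is formal bookkeeping. That step is exactly where the hypothesis that $\cF$ be \emph{functionally} distinguished (rather than distinguished outright, which would determine the full topology) does its work, and it is already isolated in the proof of \Cref{le:ktextend}.
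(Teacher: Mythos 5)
Your proposal is correct and is essentially the paper's own argument: \Cref{cor:alsot312} is obtained there precisely by transporting the equivalence of \cite[Proposition 2.6 and Theorem 2.7]{phil1} along the equivalence $\kappa$ of \Cref{th:ktcorr}, exactly as you do. Your verification that $\tensor[_\cF]{C(X)}{}=\tensor[_\cF]{C(\kappa X)}{}$ via \Cref{eq:funcdet} (resting on claim (II) in the proof of \Cref{le:ktextend}) merely makes explicit the identification the paper leaves implicit in its one-line proof.
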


There is also a Tychonoff version of \Cref{pr:whichbarr}:

\begin{theorem}\label{th:t312barrel}
  \begin{enumerate}[(1)]

  \item\label{item:t312barrel} A unital commutative pro-$C^*$-algebra is barreled if and only if it is of the form
    \begin{equation*}
      C(X)\text{ with the compact-open topology}
    \end{equation*}
    for some $T_{3\frac 12}$ space $X$ such that
    \begin{equation*}
      \text{all relatively pseudocompact $A\subseteq X$ are relatively compact.}
    \end{equation*}

  \item\label{item:samecpct} If $X\in T_{2f,\kappa}$ is a functionally Hausdorff $\kappa$-space whose relatively pseudocompact subsets are all relatively compact, then the compact subspaces of $\tau X\in T_{3\frac 12}$ are then precisely those of the original topology.

  \end{enumerate}
\end{theorem}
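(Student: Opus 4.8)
The plan is to establish \Cref{item:samecpct} first and then derive \Cref{item:t312barrel} from it together with \Cref{pr:whichbarr}.

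For \Cref{item:samecpct} I would start from the description of $\tau$ recorded around \Cref{eq:kt}: on a functionally $T_2$ space $\tau$ keeps the underlying set in place and merely weakens the topology to the initial one induced by $C(X,\bR)$, so that $C(\tau X,\bR)=C(X,\bR)$ and the identity map $X\to\tau X$ is continuous. One inclusion is then free: a continuous map carries compact sets to compact sets, so every $X$-compact subset is $\tau X$-compact. The substance is the reverse inclusion. Given a $\tau X$-compact $K$, I would observe that each $f\in C(X,\bR)$, being also $\tau X$-continuous, is bounded on $K$; hence $K$ is relatively pseudocompact in $X$, and the standing hypothesis \Cref{eq:droppseudo} makes $\overline{K}^{X}$ an $X$-compact set. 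Since $\tau X$ is Hausdorff (it is Tychonoff), $K$ is $\tau X$-closed, and therefore $X$-closed because the $X$-topology is the finer of the two; so $K=\overline{K}^{X}$ is $X$-compact.

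For \Cref{item:t312barrel}, the implication $\Leftarrow$ is immediate from \Cref{pr:whichbarr}, a Tychonoff space being in particular functionally $T_2$. For $\Rightarrow$ I would take a barreled unital commutative pro-$C^*$-algebra, write it via \Cref{pr:whichbarr} as $C(X)$ with the compact-open topology for some functionally $T_2$ $\kappa$-space $X$ satisfying \Cref{eq:droppseudo}, and then pass to $\tau X\in T_{3\frac12}$. This changes neither the function algebra nor, by \Cref{item:samecpct}, the family of compact subsets; hence the compact-open topologies on $C(X)$ and $C(\tau X)$ coincide and the algebra is exhibited as $C(\tau X)$ over a Tychonoff space. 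It remains to transport the pseudocompactness condition across: a relatively pseudocompact $B\subseteq\tau X$ is relatively pseudocompact in $X$ (the two function algebras agree), so $\overline{B}^{X}$ is $X$-compact, hence $\tau X$-compact by the easy inclusion, hence $\tau X$-closed, which forces $\overline{B}^{\tau X}=\overline{B}^{X}$ to be $\tau X$-compact.

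I expect the reverse inclusion in \Cref{item:samecpct} to be the main obstacle. The difficulty is that $\tau X$-compactness cannot be deployed directly against the finer $X$-topology; the key maneuver is to launder it into the purely functional statement ``relatively pseudocompact in $X$'', which is exactly where hypothesis \Cref{eq:droppseudo} is spent, after which the Hausdorffness of $\tau X$ supplies the closedness needed to promote relative compactness to genuine compactness.
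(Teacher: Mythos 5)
Your proposal is correct, but it runs the derivation in the opposite direction from the paper, and the two routes are genuinely different in substance. The paper proves \Cref{item:t312barrel} \emph{directly}: it re-runs the argument of \Cref{pr:whichbarr} in the Tychonoff setting, using \Cref{cor:alsot312} to represent a barreled algebra as $\tensor[_{\cF}]{C(X)}{}$ over a Tychonoff space and retrieving the Inoue-style observation that barreledness forces $\cF$ to be full (the supremum over any compact set is a lower semicontinuous seminorm, hence continuous by barreledness); it then deduces \Cref{item:samecpct} \emph{from} \Cref{item:t312barrel}, via \Cref{pr:whichbarr} and the uniqueness in the categorical correspondence of \Cref{cor:alsot312}, forcing the family of $X$-compacts to exhaust the compacts of $\tau X$. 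You instead prove \Cref{item:samecpct} first by a self-contained point-set argument --- laundering $\tau X$-compactness into relative pseudocompactness in $X$, spending \Cref{eq:droppseudo} there, and using Hausdorffness of $\tau X$ plus the finer-topology comparison to identify $K$ with its $X$-closure --- and then bootstrap \Cref{item:t312barrel} from \Cref{pr:whichbarr} by transporting both the compact-open topology and the relatively-pseudocompact condition across the reflection $\tau$. Your approach buys two things: the proof of \Cref{item:samecpct} avoids the equivalence of categories entirely (the only functional-analytic input in the whole theorem is \Cref{pr:whichbarr}), and it in fact never uses compact generation of $X$, so your argument establishes \Cref{item:samecpct} for arbitrary functionally $T_2$ spaces satisfying \Cref{eq:droppseudo}; the paper's route, by contrast, needs $X\in T_{2f,\kappa}$ already to have a pro-$C^*$-algebra to which barreledness applies. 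One small point worth a sentence in your writeup of \Cref{item:t312barrel}: the space furnished by \Cref{pr:whichbarr} is indeed a $\kappa$-space, as you assert, but this deserves justification --- it holds because the distinguished family in Phillips' representation determines the topology (\Cref{eq:topdet}) and barreledness makes that family full.
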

\begin{proof}
  We address the claims in reverse order.

  \begin{enumerate}[]

  \item {\bf \Cref{item:t312barrel} $\Longrightarrow$ \Cref{item:samecpct}:} The spaces $X$ as in the statement are, by \Cref{pr:whichbarr}, precisely those for which $C(X)$ is barreled when equipped with the compact-open topology. By \Cref{cor:alsot312} this is also $\tensor[_{\cF}]{C(X)}{}$ for the family $\cF\subset 2^X$ of subsets compact in the original topology on $X$, and that family must consist of {\it all} compact subsets of $\tau X$ by part \Cref{item:t312barrel} (which we are here assuming).

  \item {\bf \Cref{item:t312barrel}: } The proof of \Cref{pr:whichbarr} simply carries over {\it mutatis mutandis}, once we retrieve \cite[Remark 4.1]{inoue_loc} in the present context: if $\tensor[_{\cF}]{C(X)}{}$ is barreled then the family $\cF$ must be full because the supremum over every compact set is a lower semicontinuous seminorm.
  \end{enumerate}
  This finishes the proof. 
\end{proof}

\begin{remarks}\label{res:needextraconstr}
  \begin{enumerate}[(1)]
  \item Absent some additional constraint, such as the barreled-ness implicit (via \Cref{pr:whichbarr}) in \Cref{th:t312barrel} \Cref{item:samecpct}, one cannot expect the equivalence $\tau$ of \Cref{eq:ktdisteq} to turn full families of compact subsets into {\it full} families again. The two examples of non-Tychonoff $T_{2f,\kappa}$-spaces mentioned in \Cref{res:whenbarr} \Cref{item:cautioncomplhaus}, for instance, exhibit this pathology.

    For the space $E$ of \cite[Problem 3J]{gj_rings}, say, the resulting functionally distinguished family of compact sets on
    \begin{equation*}
      \tau E\cong \bR\text{ with the usual topology}
    \end{equation*}
    is that of compact subsets whose intersection with $\left\{\frac 1n\ |\ n\in\bZ_{>0}\right\}$ is finite. That family is {\it not} distinguished, so the example falls outside the common horizontal core of \Cref{eq:ktdisteq} and displays the non-trivial behavior of the vertical equivalence in that diagram.

  \item\label{item:inoueagain} Such examples also again show (as did \Cref{res:whenbarr} \Cref{item:fcx}) that for a functionally $T_2$ $\kappa$-space $X$, $\tensor[_{\cF}]{C(X)}{}$ being barreled is strictly stronger than $\cF$ being full.
  \end{enumerate}
\end{remarks}

\addcontentsline{toc}{section}{References}

\def\polhk#1{\setbox0=\hbox{#1}{\ooalign{\hidewidth
  \lower1.5ex\hbox{`}\hidewidth\crcr\unhbox0}}}
  \def\polhk#1{\setbox0=\hbox{#1}{\ooalign{\hidewidth
  \lower1.5ex\hbox{`}\hidewidth\crcr\unhbox0}}}
  \def\polhk#1{\setbox0=\hbox{#1}{\ooalign{\hidewidth
  \lower1.5ex\hbox{`}\hidewidth\crcr\unhbox0}}}
  \def\polhk#1{\setbox0=\hbox{#1}{\ooalign{\hidewidth
  \lower1.5ex\hbox{`}\hidewidth\crcr\unhbox0}}}
  \def\polhk#1{\setbox0=\hbox{#1}{\ooalign{\hidewidth
  \lower1.5ex\hbox{`}\hidewidth\crcr\unhbox0}}}

\Addresses

\end{document}